\title{Polar coordinates view on KM-arcs}
\author{Kanat Abdukhalikov and Duy Ho\\	
%Department of Mathematical Sciences\\
UAE University, PO Box 15551, Al Ain, UAE\\
abdukhalik@uaeu.ac.ae, duyho92@gmail.com}
\date{ }
\begin{document} 

\maketitle

\theoremstyle{plain} 
\newtheorem{theorem}{Theorem}
\newtheorem{corollary}{Corollary}
\newtheorem{lemma}{Lemma}
\newtheorem{observation}{Observation}
\newtheorem{proposition}{Proposition}

\theoremstyle{definition} 
\newtheorem{definition}{Definition}
\newtheorem{claim}{Claim}
\newtheorem{fact}[theorem]{Fact}
\newtheorem{assumption}{Assumption}
\newtheorem{remark}{Remark}
\newtheorem{example}{Example}

%Dedicated to Eiichi Bannai on the occasion of his 75th birthday

\begin{abstract}  
We study presentations of KM-arcs in polar coordinates.  New cha\-racterizations on the point set of KM-arcs are obtained in terms of power sums and bilinear forms.   Therefore, we provide purely algebraic criteria for the
existence of KM-arcs. We also describe some constructions of KM-arcs which include known examples and describe their automorphism groups in this presentation.	
\end{abstract} 

Keywords:   Arcs, ovals, hyperovals, power sums, Vandermonde sets. 

\section{Introduction}

In \cite{korchmaros1990}, Korchm\'aros and Mazzocca initiated the study of \textit{$(q+t, t)$-arc of type $(0, 2, t)$} in the projective plane $PG(2,q)$.  These objects are sets of $q +t$ points meeting every line in $0, 2$ or $t$ points. They are also called \textit{KM-arcs of type $t$} in honour of the authors of \cite{korchmaros1990}, as mentioned in \cite{geertrui2019}. KM-arcs were studied also in 
\cite{geertrui2015,gacs2003,vandendriessche2011,vandendriessche2017}.

KM-arcs have been investigated exclusively  in homogeneous coordinates. This presentation has its own advantages with the established algebraic methods over finite fields.  On the other hand, the drawback of this approach is that there are  points of KM-arcs presented on the infinite line. 
%This leads to descriptions of infinite points using directions of polynomials, for example in the constructions by G\'acs and Weiner in \cite{gacs2003}. 
Descriptions for structural results of KM-arcs are then  required to be split between points in the affine plane and infinite points, as seen in the constructions by G\'acs and Weiner in \cite{gacs2003}. 

In this paper, we describe KM-arcs in polar coordinates. This approach was initiated by investigations in \cite{Ab2017,Ab2019,Ab2019b,Ab2021}. With this approach, KM-arcs can be described solely with points in the affine plane. As a consequence, structural results of KM-arcs can be obtained uniformly on the point set. To demonstrate this point of view, we provide new characterizations of  KM-arcs in terms of bilinear forms and power sums. 
We also describe some examples of KM-arcs in this presentation.

The paper is organized as follows. In Section 2, we recall preliminary results on projective planes, affine planes and KM-arcs. In Section 3, we describe a configuration in Definition \ref{starset} that we use to describe KM-arcs in polar coordinates. Also in this section we obtain characterizations of KM-arcs in terms of bilinear forms and power sums, in Theorem \ref{thmbracket} and Theorem \ref{Eset}, respectively. 
In Section 4 and Section 5,    we describe  constructions of KM-arcs in $PG(2,q)$ using hyperovals and KM-arcs in $PG(2,r)$, where $q$ and $r$ are powers of $2$ with $q=r^l$. 
We also show that the KM-arcs constructed in Section 5 are
translation KM-arcs and describe their automorphisms.
 
%In Section 4,   when $l$ is odd, we describe a construction of KM-arcs in $PG(2,q)$ using hyperovals and KM-arcs in $PG(2,r)$. In Section 5, where $l$ can be even or odd, we construct a family of KM-arcs in $PG(2,q)$ using the hyperconic in $PG(2,r)$. 

%
%In the projective plane $PG(2,q)$, a \textit{KM-arc of type $t$} (also known as a \textit{$(q+t, t)$-arc of type $(0, 2, t)$}) is a set $H$ of $q +t$ points meeting every line in $0, 2$ or $t$ points, cp. \cite{korchmaros1990}, \cite{gacs2003}, \cite{vandendriessche2011}, \cite{vandendriessche2017}, \cite{geertrui2015}, 

\section{Preliminaries}

In this section we recall some definitions and notation. 

\subsection{Polar representation}
In this paper we consider finite fields only in characteristics $2$. 
Let $F = \mathbb{F}_{2^m}$ be a finite field of order $q = 2^m$. Consider $F$ as a subfield of $K = \mathbb{F}_{2^n}$, where
$n = 2m$, so $K$ is a two dimensional vector space over $F$.

The \textit{conjugate} of $x \in K$ over $F$ is $$\bar{x} = x^q.$$
Then the \textit{trace} and the \textit{norm} maps from $K$ to $F$ are
$$T(x) = Tr_{K/F}(x) = x + \bar{x} = x + x^q,$$
$$N(x) = N_{K/F}(x) = x\bar{x} = x^{1+q}.$$

The \textit{unit circle} of $K$ is the set of elements of norm $1$:
$$S = \{u \in K \mid u \bar{u}= 1 \}.$$

Therefore, $S$ is the multiplicative group of $(q +1)$st roots of unity in $K$. 
Since $F \cap S = \{1\}$, each non-zero element of $K$ has a unique polar coordinate representation $x = \lambda u$
with $\lambda \in F^*$ and $u \in S$. For any $x \in K^*$ we have 
$\lambda = \sqrt{x \bar{x}}$ and $u = \sqrt{x /\bar{x}}$.

One can define a nondegenerate bilinear form $\langle \cdot, \cdot \rangle : K \times K \rightarrow F$ by
$$\langle x, y \rangle = T(x\bar{y}) = x\bar{y} + \bar{x}y.$$
Then the form  $\langle \cdot, \cdot \rangle$ is alternating and symmetric, that is, 
$\langle a, a \rangle=0$ and 
$\langle a, b \rangle= \langle b, a \rangle$.

Following \cite{fisher2006}, consider an element $\mathbf{i} \in K$ with property $T(\mathbf{i}) = \mathbf{i}+\mathbf{i}^q = 1$. Then $K = F(\mathbf{i})$ and $\mathbf{i}$ is a root of a quadratic equation
$$
z^2 + z + \delta = 0,
$$
where $\delta = N(\mathbf{i}) \in F$. Any element $z \in K$ can be represented as $z = x + y\mathbf{i}$, where $x, y \in F$. For $z = x + y\mathbf{i}$ we have $x = \langle \mathbf{i}, z\rangle$, and $y = \langle 1, z \rangle$.

%Note that if $m$ is odd then one can choose $\mathbf{i} = \omega, \omega^2 + \omega + 1 = 0$. 
%So if $w \in K$ is a generator of $S$ then we can take $\mathbf{i} = \omega = w^{(q+1)/3}.$

\subsection{Affine and projective planes} 

Consider points of a projective plane $PG(2, q)$ in homogeneous coordinates \cite{Hir} as triples  $(x:y:z)$, where $x,y,z\in F$, $(x,y, z) \ne (0, 0, 0)$, and we identify $(x : y : z)$ with $(\lambda x : \lambda y : \lambda z)$, $\lambda \in F^*$. Then the points of $PG(2, q)$ are
$$
\{(x : y : 1) \mid x \in F, y \in F \} \cup \{ (x : 1 : 0) \mid x \in F\} \cup \{(1 : 0 : 0)\}.
$$
For $a, b, c \in F$, $(a, b, c) \ne (0, 0, 0)$, the line $[a : b : c]$ in $PG(2, q)$ is defined as
$$
[a : b : c] = \{(x : y : z) \in PG(2, q) \mid ax + by + cz = 0\}.
$$

Triples $[a : b : c]$ and $[\lambda a : \lambda b : \lambda c]$ with $\lambda \in F^*$ define the same lines. The point $(x : y : z)$ is incident with the line $[a : b : c]$ if and only if $ax + by + cz = 0$. We shall call points of the form $(x : y : 0)$ the points at infinity. Then $[0 : 0 : 1]$ indicates the line at infinity.

We define an affine plane $AG(2, q) = PG(2, q) \backslash [0 : 0 : 1]$, so points of this affine plane $AG(2, q)$ are $\{(x : y : 1) \mid  x, y \in F \}$.
Associating $(x : y : 1)$ with $(x, y)$ we can identify points of the affine plane $AG(2, q)$ with elements of the vector space $V(2, q) = \{(x, y) \mid x, y \in F\}$, and we will write $AG(2, q) = V (2, q)$.
Lines in $AG(2, q) = V (2, q)$ are $\{(c, y) \mid y \in F\}$ and $\{(x, xb + a) \mid x \in F\}, a, b, c \in F$.
These lines can be described by equations $x = c$ and $y = xb + a$.

We introduce now another representation of $PG(2, q)$ using the field $K$. Consider pairs $(x : z)$, where $x \in K, z \in F$, $x \ne 0$ or $z \ne 0$, and we identify $(x : z)$ with $(\lambda x : \lambda z),  \lambda \in F^*$. 
Then points of $PG(2, q)$ are
$$
\{(x : 1) \mid x \in K\} \cup \{(u : 0) \mid u \in S \}.
$$
For $\alpha\in K$, $\beta\in F$, $(\alpha,\beta) \ne (0,0)$,  we define the lines $[\alpha:\beta]$ in $PG(2, q)$ as
$$
[\alpha:\beta] = \{(x:z) \in PG(2,q) \mid \langle\alpha,x\rangle+\beta z = 0\}.
$$
Pairs $[\alpha:\beta]$ and $[\lambda\alpha:\lambda\beta]$ with $\lambda \in F^*$ define the same lines. The point $(x : z)$ is incident with the line $[\alpha:\beta]$ if and only if $\langle\alpha,x\rangle+\beta z = 0$. The element $u_\infty = (u : 0), u \in S$, will be referred to as the point at infinity in the direction of $u$. So $[0 : 1]$ indicates the line at infinity.

We define an affine plane $AG(2, q) = PG(2, q) \backslash [0 : 1]$, so points of  this affine plane $AG(2,q)$ are $\{(x : 1) \mid x \in K\}$.
Associating $(x : 1)$ with $x \in K$ we can identify points of the affine plane $AG(2,q)$ with elements of the field $K$, and we write $AG(2, q) = K.$ 
Lines of $AG(2, q)=K$ are of the form
$$
L(u,\mu) = \{x \in K \mid \langle u, x\rangle + \mu = 0\},
$$
where $u \in S$ and $\mu \in F$ (cp. \cite[subsection 2.1]{ball1999}).

Throughout the paper, we will consider these two representations of the projective plane $PG(2, q)$, and for each of such projective planes we consider a fixed affine plane $AG(2, q)$ described above. 
They will be written as $AG(2, q) = V (2, q)$ and $AG(2, q) = K$.

\subsection{KM-arcs and basic facts}
 
In the projective plane $PG(2,q)$, a \textit{KM-arc of type $t$} (also known as a \textit{$(q+t, t)$-arc of type $(0, 2, t)$}) is a set $H$ of $q +t$ points meeting every line in $0, 2$ or $t$ points. 

If $H$ is a KM-arc of type $t$ in $PG(2, q)$, $2 < t < q$, then
\begin{enumerate}
	\item $q$ is even and $t$ is a divisor of $q$;
	\item each point of $H$ is on exactly one $t$-secant and every other line through this point is a 2-secant of $H$, cp.  \cite{korchmaros1990};
	\item  there are $q/t + 1$ different $t$-secants to $H$, and they are concurrent at a unique point called the \textit{t-nucleus} of $H$, cp. \cite{gacs2003};
	\item all other lines contain 0 or 2 points of $H$, cp. \cite{vandendriessche2017}.
\end{enumerate}

 The case $t = q$ is classified as $H$ can be shown to be
 symmetric difference of two lines.  
 The case $t=1$ is a degenerate case when $H$ is an \textit{oval} of $q+1$ points and therefore has a unique nucleus, which we call the $1$-nucleus of $H$. When $t=2$, the set $H$ is called a \textit{hyperoval}.   In this case, if $p$ is a point in $PG(2,q)$ not on $H$, then there are $q/2 + 1$ different $2$-secants of $H$ going through $p$. Thus, any point not on $H$ can be considered as a $2$-nucleus of $H$.  

\subsection{Vandermonde sets}

We recall sets with the following property first considered by  G\'{a}cs and Weiner \cite{gacs2003} (and subsequently by other authors in  \cite{blokhuis2017,sziklai2008}). Let $1 < t < q$. A set $T = \{y_1, \cdots, y_t\} \subseteq GF(q)$
is called a  \textit{Vandermonde set}  if
	\[
\pi_{k}(T) := \sum_{y \in T} y^k	=  0,
\]
 for all $1\le k \le t-2$.  
 
\begin{lemma}[cp.  \cite{sziklai2008}, \cite{Ab2019c}] \label{vanaffine} For a set of $t$ elements the Vandermonde 
property is invariant under transformations $y \rightarrow ay+b, a\ne0$,  if and only if $t$ is even or $b = 0$.
\end{lemma}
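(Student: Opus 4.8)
The plan is to compute the power sums of a translated-and-scaled set directly from those of the original via the binomial theorem, working in characteristic $2$. Write $aT+b:=\{ay+b\mid y\in T\}$; since $a\ne 0$ the map $y\mapsto ay+b$ is a bijection with inverse $y\mapsto a^{-1}y+a^{-1}b$ of the same shape, and $a^{-1}b=0$ exactly when $b=0$, so the statement is symmetric and ``invariant'' is unambiguous. For any $t$-subset $T$ of $GF(q)$ and any integer $k\ge 0$,
$$\pi_k(aT+b)=\sum_{y\in T}(ay+b)^k=\sum_{j=0}^{k}\binom{k}{j}a^jb^{k-j}\pi_j(T).$$
The single thing to watch is the $j=0$ term, which is $b^k\pi_0(T)=b^k\cdot(t\bmod 2)$ in $GF(q)$; this is exactly where the parity of $t$ enters.

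Next I would fix $k$ with $1\le k\le t-2$ and assume $T$ is a Vandermonde set. Then $\pi_j(T)=0$ for all $j$ with $1\le j\le k$ (because $k\le t-2$), so every term with $j\ge 1$ drops out and we obtain the identity
$$\pi_k(aT+b)=(t\bmod 2)\cdot b^k.$$
From this the lemma is immediate. If $t$ is even, the right-hand side vanishes for all $k$ in the range $1\le k\le t-2$, so $aT+b$ is again a Vandermonde set, for every $a\ne 0$ and every $b$; and if $b=0$ then $\pi_k(aT)=a^k\pi_k(T)=0$ trivially. Conversely, if $t$ is odd (so $t\ge 3$ and the range $1\le k\le t-2$ is nonempty) and $b\ne 0$, then already $\pi_1(aT+b)=b\ne 0$, so $aT+b$ is not a Vandermonde set; hence the conditions ``$t$ even or $b=0$'' are also necessary.

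I do not expect a real obstacle: the whole argument is one binomial expansion plus the observation $\pi_0(T)=t\bmod 2$, which turns the problem into a parity count. The only mild point is that the necessity direction presupposes the existence of at least one Vandermonde $t$-set (so that the failure of invariance is not vacuous); this is harmless here, since subfields and their cosets already supply such sets, and in any case the displayed identity shows that when $t$ is odd and $b\ne 0$ the transformation destroys the Vandermonde property of \emph{every} Vandermonde $t$-set, not merely of some ad hoc example.
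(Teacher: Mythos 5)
Your proof is correct. The paper does not actually prove this lemma---it is quoted from \cite{sziklai2008} and \cite{Ab2019c}---and your binomial-expansion argument, which reduces everything to the identity $\pi_k(aT+b)=(t\bmod 2)\,b^k$ for $1\le k\le t-2$, is exactly the standard argument in those sources. One inessential slip in your closing remark: in characteristic $2$ subfields and their additive cosets have even cardinality, so for odd $t$ they do not witness non-vacuousness of the necessity direction; multiplicative subgroups of order $t\mid q-1$ (which is odd here) do, and in any case your identity shows the failure is universal rather than example-dependent.
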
 

Some examples of Vandermonde sets can be found in \cite[Proposition 1.8]{sziklai2008}. In Section 4, we will use the fact that any additive subgroup of $GF(q)$ is a Vandermonde set.

\section{KM-arcs in polar presentation} 

In this section we obtain characterizations of KM-arcs in terms of bilinear forms and power sums. 
 
\subsection{The configuration of a KM-arc in polar presentation}
\begin{lemma} Any KM-arc in $PG(2,q)$ is equivalent to a KM-arc in $AG(2,q)$ with $t$-nucleus at $0$.
\end{lemma}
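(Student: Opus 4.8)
The plan is to use the transitivity of the collineation group of $PG(2,q)$ to move any KM-arc into a normalized position. Recall from the basic facts that a KM-arc $H$ of type $t$ (with $2<t<q$) has a unique $t$-nucleus $P$, through which pass all $q/t+1$ of its $t$-secants; in the degenerate case $t=2$ we are dealing with a hyperoval, and any point off $H$ may serve as a $2$-nucleus, so the statement still makes sense once we fix such a point. The idea is: first choose a line $\ell$ that is \emph{not} a secant of $H$ at all, i.e.\ $\ell \cap H = \emptyset$; such a line exists by a counting argument since $H$ has $q+t < q^2+q+1$ points and the lines through a fixed external point cannot all meet $H$. Then apply a collineation sending $\ell$ to the line at infinity $[0:1]$, so that $H$ becomes a KM-arc lying entirely in the affine plane $AG(2,q)$.

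Second, I would arrange for the $t$-nucleus to sit at the origin. After the first step the $t$-nucleus $P$ is an affine point (it cannot lie on $\ell$, since through $P$ there pass several $t$-secants which meet $H$, whereas $\ell$ does not). The affine group $AGL(2,q)$ acts transitively on points of $AG(2,q)$, so compose with a translation carrying $P$ to $0$. Both maps are collineations of $PG(2,q)$, hence preserve the intersection pattern with lines and therefore carry KM-arcs to KM-arcs of the same type; their composition is the desired equivalence. In the polar model $AG(2,q)=K$ this simply says $H$ can be taken to be a subset of $K$ with its $t$-secants being lines $L(u,0)$ through $0$.

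The one genuine point requiring care — the ``hard part'' — is the existence of an external line $\ell$ with $\ell\cap H=\emptyset$. The cleanest route is to count incidences: pick any point $Q\notin H$ (which exists since $q+t<q^2+q+1$). The $q+1$ lines through $Q$ partition $H$; each such line meets $H$ in $0$, $2$, or $t$ points. If every line through $Q$ met $H$, then since the maximum per line is $t$ we would need $(q+1)\cdot$(average) $=q+t$, forcing most lines to be $2$-secants; a short arithmetic check using the known number $q/t+1$ of $t$-secants and the fact that through a non-nucleus point at most one $t$-secant passes shows the totals cannot close up unless some line through $Q$ misses $H$ entirely. (Alternatively, for $t<q$ one notes $|H|=q+t<q^2$, while the $q^2$ affine lines in any choice of affine plane... ) In any case the combinatorics is elementary; once $\ell$ is found, the rest is just invoking transitivity of the collineation group, and I would keep that part brief.
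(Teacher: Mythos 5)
Your argument is correct and follows essentially the same route as the paper's proof: find a line external to $H$, apply a collineation sending it to the line at infinity, then translate the $t$-nucleus to $0$. The paper simply asserts the existence of the external line without proof, whereas you sketch the (elementary and valid) counting argument for it; otherwise the two proofs coincide.
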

\begin{proof} Assume $H$ has a point on infinity line $L_\infty$. Let $L$  be a line which does not intersect $H$.  Consider a collineation $\gamma$ interchanging the lines $L$ and $L_\infty$. Then $\gamma(H)$ has no points at infinity and a suitable affine translation of $\gamma(H)$ is a KM-arc with $t$-nucleus at $0$.  %%\qed
\end{proof}

\begin{lemma} \label{lmkmeven} For $t \ge 2$, let  $H$ be a set of $q+t$ points in $K=AG(2,q)$.  Assume that for every point $p$ of $H$ there exists a line $L$ going through $p$ such that $|L \cap H| \ge t$.
Then $H$ is a KM-arc of type $t$ if and only if every line intersects $H$ at an even number of points. 
\end{lemma}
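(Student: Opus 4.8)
The forward direction is immediate from the basic facts listed in Subsection 2.3: if $H$ is a KM-arc of type $t$ with $2 < t < q$, then every line meets $H$ in $0$, $2$, or $t$ points, and $t$ must be a divisor of $q$, hence even; the boundary cases $t=2$ (hyperoval) and $t=q$ (symmetric difference of two lines) also have only even intersection numbers. So the content is entirely in the converse.

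For the converse, assume every line meets $H$ in an even number of points, and that through every point of $H$ there is a line $L$ with $|L\cap H|\ge t$. The plan is a standard double-counting / parity argument. First I would fix a point $p\in H$ and let $L$ be the guaranteed line through $p$ with $|L\cap H|=s\ge t$ (necessarily even). Counting the points of $H$ on the $q+1$ lines through $p$: each point of $H\setminus\{p\}$ lies on exactly one line through $p$, so $\sum_{M\ni p}(|M\cap H|-1)=q+t-1$, i.e. $\sum_{M\ni p}|M\cap H|=q+t-1+(q+1)=2q+t$. Since each summand is a positive even number (every line through $p$ meets $H$ in at least the point $p$, hence in $\ge 2$ points by the evenness hypothesis), there are exactly $q+1$ terms each $\ge 2$ summing to $2q+t$; writing $|M\cap H|=2+2a_M$ with $a_M\ge 0$ integers gives $\sum_M a_M=(t-2)/2$. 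In particular at most $(t-2)/2$ lines through $p$ are not $2$-secants, and the line $L$ with $|L\cap H|=2+2a_L\ge t$ forces $a_L\ge (t-2)/2$, so $a_L=(t-2)/2$, $|L\cap H|=t$ exactly, and every other line through $p$ is a $2$-secant. Thus every point of $H$ lies on a unique $t$-secant and otherwise only on $2$-secants.

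It remains to rule out any line meeting $H$ in some even number $\ne 0,2,t$. Suppose a line $N$ meets $H$ in $m$ points with $m\ge 4$ and $m\ne t$. Every point of $N\cap H$ lies on $N$, but by the previous paragraph the only line through such a point meeting $H$ in $\ge 3$ points is its unique $t$-secant; hence $N$ must be the $t$-secant of each of its $m$ points, forcing $m=t$, a contradiction. (One should also note $N$ cannot meet $H$ in $3$ or any odd number by hypothesis, and if $m\ge 4$ and some point of $N\cap H$ had $N$ as a $2$-secant that is absurd.) Therefore every line meets $H$ in $0$, $2$, or $t$ points, and since $|H|=q+t$ this is exactly a KM-arc of type $t$. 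For the degenerate values this still reads correctly: if $t=2$ the hypothesis is vacuous beyond "$H$ has $q+2$ points with every line even," which is the definition of a hyperoval, and if $t=q$ one recovers the symmetric-difference configuration.

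The only genuinely delicate point is the bookkeeping in the pencil-counting step — making sure the inequality $a_L\ge(t-2)/2$ together with $\sum a_M=(t-2)/2$ is used correctly to pin down $L$ as the unique $t$-secant and all other lines through $p$ as $2$-secants; everything else is routine. A minor subtlety worth stating explicitly is why every line through $p$ meets $H$ in at least $2$ and not $0$ points: it contains $p\in H$, and the evenness hypothesis upgrades $1$ to $\ge 2$. No use of the field structure of $K$ is needed here; the argument is purely combinatorial in $PG(2,q)$.
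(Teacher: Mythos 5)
Your proposal is correct and follows essentially the same route as the paper: both proofs fix a point $p\in H$, use the guaranteed line $L$ with $|L\cap H|\ge t$ together with the evenness hypothesis to count over the pencil of $q+1$ lines through $p$, and conclude that $L$ is an exact $t$-secant while every other line through $p$ is a $2$-secant. Your extra bookkeeping (the $a_M$ decomposition and the explicit exclusion of lines meeting $H$ in $m\ge 4$, $m\ne t$ points) is a more detailed rendering of what the paper compresses into the observation that the $q$ remaining lines must share at most $q$ remaining points.
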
  
\begin{proof} If $H$ is a KM-arc of type $t$ then every line intersects $H$ at $0,2$ or $t$ points. 
		
	Assume every line intersects $H$ at an even number of points. Let $p$ be a point on $H$ and let 
		$L$ be a line going through $p$ such that $|L \cap H| \ge t$.
		Let $\mathcal{L}$ be the set of  $q+1$ lines going through $p$.   
		Since there are at most $q$ points in $H \backslash (H \cap L)$, each line in $\mathcal{L} \backslash L$ contains exactly two points of $H$ (including $p$). This also implies  $|L \cap H| =t$. In particular, every line intersects $H$ at $0,2$ or $t$ points and so $H$ is a KM-arc of type $t$. %%\qed
\end{proof}

 We now describe a configuration $H$ of points that is used throughout the paper. 

\begin{definition} \label{starset} Let $t \ge 2$. A set $H$ of $q+t$ points in $K=AG(2,q)$ is called a \textit{star-set} if all points of $H$ belong to a union of $\dfrac{q}{t}+1$ lines concurrent at $0$, and each line contains $t$ points of $H$. 
\end{definition}

%
%Let $r:=q/t$. Let $U:=\{u_0, \dots, u_r \}$ be a set of $r+1$ points such that
%\begin{enumerate}
%	\item  no three points of $U$ are collinear;
%	\item for each $v \in S$, the line $\langle v,x \rangle =0$ contains at most one point from $U$. 
%\end{enumerate}
%
%
%
% For $0 \le i \le r$, let  $H_i$ be a set of $t$ points on the line $\langle u_i,x \rangle = 0$. 
%Let 
%\begin{equation} \label{configstar}
%H:= \bigcup_{i=1}^{r} H_i. \tag{$*$}
%\end{equation}

%By construction,  for every point $p$ of $H$ there exists a line $L$ going through $p$ such that $|L \cap N| = t$. 
By Lemma \ref{lmkmeven}, a star-set $H$ is a KM-arc of type $t \ge 2$ (with $t$-nucleus at $0$) if and only if every line intersects $H$ at an even number of points.

\begin{theorem}  \label{thmbracket}  Let $H$ be a star-set.  Then   $H$ is a KM-arc of type $t \ge 2$ with $t$-nucleus at $0$ if and only if for each $v \in S$ and $1 \le k \le q-2$,
	\begin{equation}  \label{eqbracket}
	\sum_{y \in H} \left\langle v, y\right\rangle^k=0.
	\end{equation} 
\end{theorem}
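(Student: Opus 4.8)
The plan is to reduce the KM-arc condition, via Lemma \ref{lmkmeven}, to the statement that every line of $AG(2,q)=K$ meets $H$ in an even number of points, and then to encode ``even intersection with every line'' as a system of polynomial identities over $F=\mathbb{F}_q$ using an indicator-polynomial argument. Recall that a line of $AG(2,q)=K$ has the form $L(u,\mu)=\{x\in K : \langle u,x\rangle+\mu=0\}$ with $u\in S$ and $\mu\in F$. So for a fixed direction $u\in S$, the multiset of values $\{\langle u,y\rangle : y\in H\}\subseteq F$ records exactly how $H$ is distributed among the $q$ parallel lines in that direction: the line $L(u,\mu)$ contains $|\{y\in H : \langle u,y\rangle=\mu\}|$ points of $H$. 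Hence the KM-arc condition becomes: for every $u\in S$ and every $\mu\in F$, the value $\mu$ occurs an even number of times in the list $(\langle u,y\rangle)_{y\in H}$.

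The key step is the standard finite-field trick for counting multiplicities mod $2$. For a multiset $M$ of size $N$ in $F=\mathbb{F}_q$ and an element $\mu\in F$, the number of times $\mu$ appears in $M$, reduced modulo $2$ (equivalently in $F$, since $\mathrm{char}\,F=2$), equals $\sum_{a\in M}\bigl(1-(a-\mu)^{q-1}\bigr) = \sum_{a\in M} 1 - \sum_{a\in M}(a-\mu)^{q-1}$, using $c^{q-1}=1$ for $c\neq 0$ and $0^{q-1}=0$. Since $|H|=q+t$ and $t$ is the relevant parity, expanding $(a-\mu)^{q-1}=\sum_{k=0}^{q-1}\binom{q-1}{k}a^k\mu^{q-1-k}$ shows that all multiplicities of $\langle u,\cdot\rangle$-values are even, for every $\mu$, precisely when the coefficient of each power $\mu^{j}$ ($0\le j\le q-2$; the $j=q-1$ term is governed by $|H|$ and $t$, which is automatic for a star-set) vanishes, i.e. when $\sum_{y\in H}\langle u,y\rangle^{k}=0$ for all $1\le k\le q-2$. (The $k=0$ term contributes $|H|\cdot 1 = t$ in $F$, which is consistent since a star-set already has each of the $q/t+1$ lines through $0$ meeting $H$ in exactly $t$ points; one should note that the lines through $0$ are themselves among the $L(u,\mu)$, handled by the same identities.) This is exactly equation \eqref{eqbracket}. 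Conversely, if \eqref{eqbracket} holds for all $v\in S$ and $1\le k\le q-2$, then every multiplicity is even, so every line meets $H$ evenly, and $H$ is a KM-arc by Lemma \ref{lmkmeven} once we observe that a star-set already supplies, for each point $p\in H$, the line through $0$ and $p$ as a line with at least $t$ points.

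The main obstacle I anticipate is bookkeeping at the boundary exponents and making sure the ``each line through $0$ meets $H$ in exactly $t$ points'' hypothesis of a star-set is correctly reconciled with the mod-$2$ count: one must check that the $k=q-1$ (and $k=0$) contributions in the indicator expansion are automatically correct for a star-set and do not impose extra conditions, so that the content of the theorem is exactly the vanishing for $1\le k\le q-2$. A secondary point to verify carefully is that as $\mu$ ranges over all of $F$, the polynomial identity ``$\sum_{y\in H}(\langle u,y\rangle-\mu)^{q-1}=\text{const}$ in $\mu$'' forces each coefficient separately to vanish — this is immediate since a polynomial in $\mu$ of degree $\le q-1$ that is constant on all of $\mathbb{F}_q$ has all its top $q-2$ coefficients zero (a nonzero polynomial of degree $\le q-1$ cannot have $q$ roots unless it is constant). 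Everything else is a routine expansion using $\binom{q-1}{k}\equiv 1\pmod 2$ for all $k$ (Lucas' theorem, since $q-1$ is all $1$'s in binary), which conveniently makes every coefficient a clean power sum $\sum_{y\in H}\langle u,y\rangle^{k}$.
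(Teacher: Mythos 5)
Your proposal is correct, and it follows the same skeleton as the paper's proof: reduce via Lemma \ref{lmkmeven} to ``every line meets $H$ evenly,'' fix a direction $v\in S$, and analyse the parity of the multiplicities of the values $\langle v,y\rangle$, $y\in H$. The difference is in how the equivalence ``all multiplicities even $\iff$ all power sums vanish'' is established. The paper collects the values $\mu$ of odd multiplicity into a set $\Omega$, shows $\sum_{\mu\in\Omega}\mu^k=0$ for $1\le k\le q-2$ together with $|\Omega|$ even, and concludes $\Omega=\varnothing$ from the linear independence of the vectors $(1,\mu,\dots,\mu^{q-2})$. You instead encode the parity of each multiplicity in the single polynomial $\sum_{y\in H}\bigl(\langle v,y\rangle+\mu\bigr)^{q-1}$, expand it with Lucas' theorem ($\binom{q-1}{k}\equiv 1 \bmod 2$), and use that a polynomial of degree at most $q-1$ vanishing on all of $\mathbb{F}_q$ is identically zero --- which is the same Vandermonde fact in dual form. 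What your packaging buys is that both implications fall out of one identity and that the boundary exponents are made visible rather than hidden: the coefficient of $\mu^{q-1}$ is $|H|\bmod 2$ and the constant term is $\pi_{q-1}$, and you must (as you note) discharge these using the star-set structure. To make that airtight, record explicitly that $t$ divides $q$ for a star-set, hence $t$ is even and $|H|=q+t\equiv 0$; and that $\pi_{q-1}\bigl(\{\langle v,y\rangle\}\bigr)$ counts $|H\setminus L(v,0)|$ modulo $2$, which is even because $|H\cap L(v,0)|\in\{0,t\}$. The paper avoids these two checks by instead using the parity of $|\Omega|$ (deduced from $|H\setminus H_v|$ being even) to supply the $k=0$ relation; the net content is the same. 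One cosmetic slip: $|H|\cdot 1_F$ equals $t\cdot 1_F=0$, not ``$t$'' as a nonzero quantity, so the $k=0$ term genuinely vanishes rather than merely being ``consistent.''
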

	\begin{proof}  The case $t=2$ is proved in \cite{Ab2019c}. In the remainder of the proof, we assume $t > 2$. Assume $H$ is a KM-arc of type $t$ with $t$-nucleus at $0$. Fix $v \in S$. For each $\mu \in F$, the line 
		$
		\left\langle v, x \right\rangle = \mu
		$
		intersects $H$ at an even number of points. In particular, the equation
		$$
		\left\langle v, x\right\rangle = \mu
		$$
		has an even number of solutions in $H$. This implies condition \eqref{eqbracket} is true for each $v \in S$ and $1 \le k \le q-2$.

		Conversely, assume condition \eqref{eqbracket} is true for each $v \in S$ and $1 \le k \le q-2$.  We will show that every line $\langle v, x\rangle = \mu$ intersects $H$ at an even number of points. Fix $v \in S$.   For $\mu=0$, the intersection $H_v$ of $H$ and the line  $\langle v, x\rangle =0$ is of size $0$ or $t$.    For each $\mu \in F^*$, let 
			$$Y_\mu := \left\{ y \in H  \mid \left\langle v, y\right\rangle = \mu \right\}.$$ 
			
			We note that, for each $1 \le k \le q-2$,  
		\[
		\sum_{y \in Y_\mu} \left\langle v, y\right\rangle^k= 
		\begin{cases}
		\mu^k & \text{if $|Y_\mu|$ is odd, }\\
		0         & \text{if $|Y_\mu|$ is even.}
		\end{cases} 
		\]
		
	Let $\Omega:= \{ \mu \in F^* \mid   |Y_\mu| \text{ is odd} \}$.  Assume that $\Omega \ne \varnothing$. Then
$$
0=\sum_{y \in H} \left\langle v, y \right\rangle^k
=\sum_{\mu \in \Omega}	\sum_{y \in Y_\mu} \left\langle v, y\right\rangle^k 
= \sum_{\mu \in \Omega} \mu^k.
$$
		Also, the sets $Y_\mu$ partition the set $ H \backslash H_v$ of even size (either $q$ or $q+t$),  so  that $|\Omega|$ is even. In particular, the sum of the vectors $(1, \mu, \mu^2, \cdots, \mu^{q-2})$, $\mu \in \Omega$, is the zero vector. 
		On the other hand, the Vandermonde's determinant implies that these vectors are linearly independent. Hence $\Omega = \varnothing$. 
			
		We have shown that  every line $\langle v, x\rangle = \mu$ intersects $H$ at an even number of points. By Lemma \ref{lmkmeven}, $H$ is a KM-arc of type $t$ with $t$-nucleus at $0$.   %%\qed
	\end{proof}

\subsection{KM-arcs and power sums}
We recall and extend \cite[Lemma 9]{Ab2019c} to the following.
\begin{lemma} \label{bracketpower} The power of the bilinear form $\langle \cdot, \cdot \rangle$ is given by 
	$$
	\langle a,b \rangle^k = 
	\sum_{i=0}^{\lfloor(k-1)/2\rfloor} \binom{k}{i} \langle a^{iq+k-i},b^{iq+k-i} \rangle= 
	\sum_{i=\lceil(k+1)/2\rceil}^{k} \binom{k}{i} \langle a^{iq+k-i},b^{iq+k-i} \rangle.
	$$	  
\end{lemma}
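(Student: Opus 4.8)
The plan is to expand $\langle a,b\rangle^{k}=(ab^{q}+a^{q}b)^{k}$ by the binomial theorem in characteristic $2$ and then group the resulting monomials into conjugate pairs. First I would write
$$
\langle a,b\rangle^{k}=\sum_{i=0}^{k}\binom{k}{i}\,a^{\,i+q(k-i)}\,b^{\,iq+(k-i)},
$$
and denote the $i$-th term by $c_{i}$. The key point is the identity $c_{i}+c_{k-i}=\binom{k}{i}\langle a^{iq+k-i},b^{iq+k-i}\rangle$. To prove it I would use $\binom{k}{i}=\binom{k}{k-i}$ together with the fact that every $x\in K=\mathbb{F}_{2^{2m}}$ satisfies $x^{q^{2}}=x$: in $\langle a^{m},b^{m}\rangle=a^{m}b^{mq}+a^{mq}b^{m}$ with $m=iq+k-i$ one reduces $mq=iq^{2}+(k-i)q$ to $i+(k-i)q$, and the two monomials that appear are exactly $c_{i}$ and $c_{k-i}$.

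Next I would carry out the pairing over the index set $\{0,1,\dots,k\}$. When $k$ is odd these indices split into the pairs $\{i,k-i\}$ with $0\le i\le\frac{k-1}{2}=\lfloor(k-1)/2\rfloor$, and summing the identity above over these pairs yields the first asserted formula. When $k$ is even there is in addition the self-paired index $i=k/2$, whose term carries the coefficient $\binom{k}{k/2}$; but the central binomial coefficient is even, since $\binom{2m}{m}=2\binom{2m-1}{m-1}$, so this middle term vanishes in characteristic $2$, and only the pairs $\{i,k-i\}$ with $0\le i\le k/2-1=\lfloor(k-1)/2\rfloor$ contribute. Hence the first equality holds in both parities.

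For the second equality I would observe that the summand $\langle a^{iq+k-i},b^{iq+k-i}\rangle$ is invariant under $i\mapsto k-i$: again by $x^{q^{2}}=x$, the exponents $iq+k-i$ and $(k-i)q+i$ produce the same unordered pair of monomials inside $\langle\cdot,\cdot\rangle$. Combined with $\binom{k}{i}=\binom{k}{k-i}$, re-indexing $i\mapsto k-i$ turns the range $0\le i\le\lfloor(k-1)/2\rfloor$ into $\lceil(k+1)/2\rceil\le i\le k$ without changing the value of the sum, which gives the second formula.

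The binomial expansion and the exponent bookkeeping are routine; the step I would be most careful about is the vanishing of the middle term when $k$ is even, which relies on the classical but easily overlooked fact that $\binom{k}{k/2}$ is always even, and the consistent use of $x^{q^{2}}=x$ to normalize exponents modulo $q^{2}-1$.
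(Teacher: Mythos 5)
Your proof is correct and follows essentially the same route as the paper: binomial expansion of $(ab^q+a^qb)^k$ in characteristic $2$, pairing the terms $i$ and $k-i$ using $x^{q^2}=x$ and $\binom{k}{i}=\binom{k}{k-i}$, and killing the middle term for even $k$ via the evenness of $\binom{k}{k/2}$. The only difference is that the paper imports the first equality by citation and derives only the second by this computation, whereas you prove both directly from the same pairing; this is a harmless (indeed self-contained) variant, not a different method.
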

\begin{proof} It was shown in \cite[Lemma 9]{Ab2019c} that
		$$
	\langle a,b \rangle^k = 
	\sum_{i=0}^{\lfloor(k-1)/2\rfloor} \binom{k}{i} \langle a^{iq+k-i},b^{iq+k-i} \rangle.
	$$
		For the remaining equality, when $k$ is odd we have
		\begin{align*}
		\langle a,b \rangle^k &= (a^qb+ab^q)^k= \sum_{i=0}^k \binom{k}{i} a^{iq}b^{i}a^{k-i}b^{(k-i)q}= \sum_{i=0}^k \binom{k}{i} a^{iq+k-i}b^{i+(k-i)q} \\
		&= \sum_{i=0}^{(k-1)/2} \binom{k}{i} a^{iq+k-i}b^{i+(k-i)q}
		+ \sum_{i={(k+1)/2}}^{k} \binom{k}{i}  a^{iq+k-i}b^{i+(k-i)q} \\
		&= \sum_{i={(k+1)/2}}^{k} \binom{k}{i}a^{i+(k-i)q}b^{iq+k-i}
		+  \sum_{i={(k+1)/2}}^{k} \binom{k}{i}  a^{iq+k-i}b^{i+(k-i)q} \\
		&= \sum_{i={(k+1)/2}}^{k}\binom{k}{i} \left(  a^{iq+k-i}b^{i+(k-i)q}+ a^{i+(k-i)q}b^{iq+k-i} \right) \\
		%		&= \sum_{i=0}^{(k-1)/2}\binom{k}{i} \left(  a^{i+(k-i)q}\left(b^{i+(k-i)q}\right)^q+ \left(a^{i+(k-i)q}\right)^qb^{i+(k-i)q}\right) \\
		&=\sum_{i={(k+1)/2}}^{k} \binom{k}{i}
		\langle a^{iq+k-i},b^{iq+k-i} \rangle.
		\end{align*}   
		Similar to the above, when $k$ is even, we have
		\begin{align*}
		\langle a,b \rangle^k  &= \sum_{i=k/2+1}^{k} \binom{k}{i}\langle a^{iq+k-i},b^{iq+k-i} \rangle + \binom{k}{k/2} \langle a^{iq+k-i},b^{iq+k-i} \rangle \\
		&= \sum_{i=k/2+1}^{k} \binom{k}{i}\langle a^{iq+k-i},b^{iq+k-i} \rangle,
		\end{align*}   
		since $\binom{k}{k/2}$ is even.
		%%\qed
\end{proof}

We recall the following partial ordering $\preceq$ on the set of nonnegative integers. If
$$
x=\sum_{i=0}^{m}x_i2^i \text{ and } y=\sum_{i=0}^{m}y_i2^i
$$
(where each $x_i$ and each $y_i$ is either $0$ or $1$), then $x \preceq y$ if and only if $x_i \le y_i$ for all $i$.
In other words, $x \preceq y$ if and only if all nonzero terms appearing in the
binary expansion of $x$ also appear in the binary expansion of $y$.

Let
\begin{equation*}  
D:= \{iq+k-i\mid 1 \le k \le q-2, 0\le i \le \lfloor(k-1)/2\rfloor, 
i \preceq k \}. 
\end{equation*}

%Let $\sim$ be the equivalence relation on $D$ defined by $x \sim y$ if and only if there  exists $l \in \mathbb{Z}$ such that $x\equiv 2^ly \pmod{q^2-1}$. Let 
%$$\mathcal{D}:=D/ \sim. $$
\begin{theorem}  \label{Dset} 
	Let $H$ be a star-set.  Then $H$ is a KM-arc of type $t$  with $t$-nucleus at $0$  if and only if 
	\[
	\pi_{d}(H) := \sum_{y \in H} y^d	=  0,
	\]
	for all $d \in D$. 
	%	$\binom{k}{i}\ne0$. 
\end{theorem}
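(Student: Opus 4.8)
The plan is to deduce Theorem~\ref{Dset} from Theorem~\ref{thmbracket} by rewriting the bilinear-form condition \eqref{eqbracket} in terms of the power sums $\pi_d(H)$, using the expansion from Lemma~\ref{bracketpower}. First I would fix $v \in S$ and, for each $1 \le k \le q-2$, substitute the identity
\[
\langle v, y\rangle^k = \sum_{i=0}^{\lfloor(k-1)/2\rfloor} \binom{k}{i} \langle v^{iq+k-i}, y^{iq+k-i}\rangle
\]
into \eqref{eqbracket}. Summing over $y \in H$ and interchanging the two finite sums gives
\[
\sum_{y \in H} \langle v, y\rangle^k = \sum_{i=0}^{\lfloor(k-1)/2\rfloor} \binom{k}{i} \left\langle v^{iq+k-i}, \sum_{y\in H} y^{iq+k-i}\right\rangle = \sum_{\substack{i=0\\ i \preceq k}}^{\lfloor(k-1)/2\rfloor} \binom{k}{i} \langle v^{iq+k-i}, \pi_{iq+k-i}(H)\rangle,
\]
where the restriction $i \preceq k$ comes from Lucas' theorem, since $\binom{k}{i}$ is odd exactly when $i \preceq k$ (and the ambient characteristic is $2$). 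Thus \eqref{eqbracket} holding for all $k$ and all $v\in S$ is equivalent to: for every $v \in S$ and every $k$ with $1\le k\le q-2$, $\sum_{d} \langle v^d, \pi_d(H)\rangle = 0$ where $d$ ranges over the exponents $iq+k-i$ with $0\le i\le\lfloor(k-1)/2\rfloor$ and $i\preceq k$.

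The easy direction is then immediate: if $\pi_d(H)=0$ for all $d\in D$, then every term $\langle v^d, \pi_d(H)\rangle$ vanishes, so \eqref{eqbracket} holds and Theorem~\ref{thmbracket} gives that $H$ is a KM-arc. For the converse I would argue that the assignment $(k,i)\mapsto d=iq+k-i$ behaves well enough that a single power sum $\pi_d(H)$ never gets cancelled against others: I want to show that each $d\in D$ arises from a \emph{unique} pair $(k,i)$ in the index set, so that the system of linear relations (one for each $k$) can be resolved one power sum at a time. Given $d$ with $0\le i\le\lfloor(k-1)/2\rfloor$, we have $i \le (k-1)/2 < k \le q-2 < q$, so $d = iq + (k-i)$ with $0 \le k-i < q$ and $0 \le i < q$; hence $i$ and $k-i$ are literally the base-$q$ digits of $d$, which recovers both $i$ and $k$ uniquely. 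So for a fixed admissible $k$, distinct $i$ give distinct $d$, and the relation for that $k$ reads $\sum_{i} \binom{k}{i}\langle v^{d(k,i)}, \pi_{d(k,i)}(H)\rangle = 0$ for all $v\in S$.

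The remaining point is to extract $\pi_d(H)=0$ from these relations. I would induct on $k$ (equivalently on the ``weight'' $i$ appearing): for $k$ odd with $i=0$ the relation is simply $\langle v^k, \pi_k(H)\rangle = 0$ for all $v\in S$; since $\{v^k : v\in S\}$ still spans $K$ over $F$ when $\gcd(k, q+1)$ is controlled — and more robustly, since nondegeneracy of $\langle\cdot,\cdot\rangle$ together with $v$ ranging over all of $S$ (which linearly spans $K$, as $S\not\subseteq F$) forces $\pi_k(H)=0$ once we also use that $v\mapsto v^k$ is surjective onto $S$ whenever $\gcd(k,q+1)=1$, and otherwise we can still run the argument replacing $v$ by $v^{1/k}$ on the subgroup — we get $\pi_k(H)=0$. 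Then for larger $k$, the relation $\sum_i \binom{k}{i}\langle v^{d}, \pi_d(H)\rangle=0$ has all terms with smaller associated $k'$ (here $k' = k - 2i$ when reindexing via Lemma~\ref{bracketpower}) already known to vanish by induction, leaving a single term $\langle v^{d_0}, \pi_{d_0}(H)\rangle = 0$ for all $v$, whence $\pi_{d_0}(H)=0$; running over all $k$ exhausts $D$. The main obstacle I anticipate is precisely this last step: making the ``$v$ ranges over $S$, so $v^d$ ranges over enough of $K$ to force the power sum to be zero'' argument fully rigorous when $\gcd(d, q+1) \neq 1$, and correctly organizing the induction so that each equation isolates exactly one new $\pi_d(H)$; this is a bookkeeping argument on the digit structure of $D$, and I would expect the authors to handle it either by a direct appeal to the equivalence with Theorem~\ref{thmbracket} term-by-term (noting $D$ is exactly the set of exponents appearing) or by a clean induction on $k$.
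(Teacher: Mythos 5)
Your first half — substituting Lemma~\ref{bracketpower} into \eqref{eqbracket}, interchanging sums, and using Lucas' theorem to restrict to $i \preceq k$ — is exactly the paper's computation, and your "easy direction" is correct. The gap is in the converse, and it is a genuine one: your proposed induction on $k$ does not work. You correctly observe that the map $(k,i)\mapsto d=iq+(k-i)$ is injective (base-$q$ digits), but this very observation defeats your induction: it means the power sums $\pi_{iq+k-i}(H)$ appearing in the relation for a given $k$ are \emph{all new} — none of them occurs in the relation for any smaller $k$ — so there is nothing "already known to vanish by induction." You appear to be conflating the exponent of $v$ after reduction modulo $q+1$ (which is $k-2i$, since $v^{q+1}=1$ gives $v^{iq+k-i}=v^{k-2i}$) with the exponent $iq+k-i$ of $y$ inside the power sum; the former decreases as $i$ grows, the latter does not reappear for smaller $k$. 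Even the base case is off: for odd $k>1$ the relation already contains several terms (e.g.\ $k=3$ gives $i\in\{0,1\}$), not just $\langle v^k,\pi_k(H)\rangle$.

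The correct way to finish — the paper's — is to work with a \emph{fixed} $k$ and treat the whole relation as a polynomial identity in $v$. Writing $\langle v^d,\pi_d(H)\rangle=\pi_d(H)\,v^{q+1+2i-k}+\pi_d(H)^q\,v^{k-2i}$ for $d=iq+k-i$, the left-hand side of the relation becomes a polynomial $P_k(v)$ of degree at most $q$. The exponents $k-2i$ are pairwise distinct as $i$ varies, so are the exponents $q+1+2i-k$, and $k-2i=q+1+2i'-k$ is impossible because $q+1$ is odd; hence every monomial of $P_k$ carries a single coefficient $\pi_d(H)$ or $\pi_d(H)^q$. Since $P_k$ vanishes on all $q+1$ elements of $S$ but has degree at most $q$, it is the zero polynomial, so every coefficient vanishes and $\pi_d(H)=0$ for every $d$ arising from that $k$ — all at once, with no induction and no case analysis on $\gcd(d,q+1)$. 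Your worry about surjectivity of $v\mapsto v^k$ is a red herring; the relevant fact is simply that distinct monomials $v^s$, $0\le s\le q$, are linearly independent as functions on the $(q+1)$-element set $S$.
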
 	
\begin{proof}  Let $H= \{ y_1, \cdots y_{q+t}\}$. Fix $1\le k\le q-2$. 	For each $v \in S$, we have
			\begin{align*}
			\sum_{j=1}^{q+t} \left\langle v, y_j\right\rangle ^k 
			&= \sum_{j=1}^{q+t} \sum_{i=0}^{\lfloor(k-1)/2\rfloor}  \binom{k}{i} \left\langle v^{iq+k-i},y_j^{iq+k-i} \right\rangle\\
			&= \sum_{i=0}^{\lfloor(k-1)/2\rfloor}\sum_{j=1}^{q+t} \binom{k}{i} \left\langle v^{iq+k-i},y_j^{iq+k-i}\right\rangle \\ 
			&= \sum_{i=0}^{\lfloor(k-1)/2\rfloor}  \binom{k}{i} \left\langle v^{iq+k-i},\pi_{iq+k-i}(H)\right\rangle \\
			&= \sum_{\substack{ i \preceq k \\0 \le i \le \lfloor(k-1)/2\rfloor }} \left\langle v^{iq+k-i},\pi_{iq+k-i}(H)\right\rangle = P_k(v),
			\end{align*}
where $P_k$ is the polynomial obtained from expanding the terms in the last sum. We can assume $P_k$ has degree at most $q$, as $v^{q+1}=1$.
			
			The power of $v$ in $P_k(v)$ is either of the form $k-2i$ or $q+1+2i-k$. If $i \ne i'$, then $k-2i \ne k-2i'$, and  $k-2i \ne 2i'-k+q+1$.	Hence $P_k(v)$ has the form $P_k(v) = \sum \pi_lv^s$. 
			
			By Theorem \ref{thmbracket}, $H$  is a KM-arc of type $t$  with $t$-nucleus at $0$ if and only if	$P_k(v)=0$ has $q+1$ roots $v \in S$ for each $1 \le k \le q-2$. Equivalently, the coefficients of $P_k$ are zeros, that is, $\pi_d(H)=0$ for all $d \in D$.    %\qed
\end{proof}

%
%\begin{remark} In \cite{Ab2019c}, it was proved that an oval with nucleus at $0$  can also be characterized using the same set $D$ as in Theorem \ref{Dset}.
%\end{remark}

Now we introduce an analog of the set $D$, which contains more elements but looks more symmetric. 
Let 
\[
D':=\{k+(q-1)i \mid 1 \le k \le q-1, i \preceq k \},
\]
\[
E:= \left\{ \sum_{j=0}^{m-1} 2^j x_j >0 \mid x_j \in \{0,1,q\}\right\}.
\]
An element $x \in E$ can also be described in binary form as  
\[
x=\sum_{j=0}^{2m-1} 2^j x_j>0,
\]
where $x_j \in \{0,1\}$ for $0 \le j \le 2m-1$, and $(x_j, x_{m+j}) \ne (1,1)$, for $0 \le j \le  m-1$.

\begin{lemma} \label{D=E} $D'=E$. 
\end{lemma}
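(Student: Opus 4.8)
The plan is to prove the set equality $D'=E$ by double inclusion, translating both descriptions into the language of binary expansions modulo the relation $v^{q+1}=1$ in $S$, equivalently reducing exponents modulo $q^2-1 = (q-1)(q+1)$; but since all exponents involved are small, it is cleaner to work with the explicit binary forms directly.

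First I would unwind the definition of $D'$. An element of $D'$ has the form $k + (q-1)i = k - i + qi$ with $1 \le k \le q-1$ and $i \preceq k$. Writing $k = \sum_{j=0}^{m-1} k_j 2^j$ and noting $i \preceq k$ means $i_j \le k_j$ for all $j$, I observe that $k-i$ and $i$ have disjoint binary supports inside $\{0,\dots,m-1\}$ (since $i_j = 1$ forces $k_j = 1$, and then that bit contributes to $i$, not to $k-i$). Thus $k + (q-1)i = (k-i) + q\cdot i$ is a sum where the low block $\{0,\dots,m-1\}$ carries the bits of $k-i$ and the high block $\{m,\dots,2m-1\}$ carries the bits of $i$, with no bit position $j$ having both $k-i$ and $i$ nonzero. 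This is exactly the binary-form description of $E$: each $2^j$-slot gets coefficient $0$, $1$ (from $k-i$), or $q$ (from $i$, i.e. a bit in position $m+j$), never both $1$ and $q$ in the same slot $j$, and the value is positive since $k \ge 1$ forces $k-i$ or $i$ nonzero. Conversely, given $x \in E$ with slot coefficients $x_j \in \{0,1,q\}$, set $i := \sum_{j: x_j = q} 2^j$ and $k := i + \sum_{j: x_j = 1} 2^j$; then $i \preceq k$, $1 \le k \le q-1$ (the bits of $k$ all lie in positions $0,\dots,m-1$, and $k > 0$ since $x>0$), and $k + (q-1)i = x$. This gives $D' \subseteq E$ and $E \subseteq D'$.

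The one subtlety worth checking carefully — and the main obstacle — is the upper bound $k \le q-1$ versus the parameter ranges, together with confirming that the correspondence $x \leftrightarrow (k,i)$ is genuinely well-defined and bijective rather than merely surjective-both-ways. Concretely: in the $D'$-to-$E$ direction I must verify that the decomposition $x = (k-i) + qi$ really does put all of $k-i$'s bits in positions $<m$ and all of $i$'s bits in positions $\ge m$, with no carry or overlap; this uses $k-i < q = 2^m$ and $i < q$ and the disjoint-support observation, so $qi = \sum_{j:i_j=1} 2^{m+j}$ has support entirely in $[m,2m-1]$ while $k-i$ has support in $[0,m-1]$, hence addition is bit-concatenation with no carrying. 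In the $E$-to-$D'$ direction I must check that forbidding $(x_j,x_{m+j}) = (1,1)$ is precisely what guarantees $i \preceq k$: if slot $j$ has coefficient $q$ then bit $m+j$ of $x$ is $1$ and bit $j$ is $0$, so bit $j$ of $k := i + (\text{ones part})$ comes from $i$ and is $1$, giving $i_j \le k_j$; and if slot $j$ has coefficient $1$ then $i_j = 0 \le k_j$. I would also remark that $E$ as defined via the first formula ($x_j \in \{0,1,q\}$) and via the second (binary form with the compatibility constraint) coincide — this is immediate since $q x_j$-term with $x_j = q$ contributes $q \cdot 2^j = 2^{m+j}$ — so the lemma as stated is consistent, and no further work is needed there.

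Finally I would note that this lemma is the bridge between the "small" index set $D$ used in Theorem~\ref{Dset} and the symmetric set $E$: combined with Lemma~\ref{bracketpower}'s second expression (the sum over $i \in [\lceil(k+1)/2\rceil, k]$) one sees that $D$ and its "reflected" counterpart together fill out $D'$, and hence $E$, so the vanishing conditions $\pi_d(H) = 0$ for $d \in D$ are equivalent to those for $d \in E$. The proof of Lemma~\ref{D=E} itself, however, is purely combinatorial bookkeeping on binary digits and requires none of the geometry.
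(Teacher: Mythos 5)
Your argument is correct and is essentially the paper's own proof in a lightly different notation: writing $k+(q-1)i=(k-i)+qi$ and using the disjointness of the binary supports of $k-i$ and $i$ is the same digit-by-digit computation as the paper's observation that $k_j+(q-1)i_j\in\{0,1,q\}$ in each slot, and your converse construction of $(k,i)$ from the slot coefficients matches the paper's as well. No gaps; your explicit check that $1\le k\le q-1$ and that no carries occur is slightly more careful than the paper's wording but adds nothing essentially new.
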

	\begin{proof} We rewrite
		$
		k= \sum_{j=0}^{m-1} 2^j k_j,
		$
		where $k_j \in \{0,1\}$. 	For $i \preceq k$, we rewrite
		\[
		i= \sum_{j=0}^{m-1} 2^j i_j,
		\]
		where $i_j \le k_j$. 
		Let $d:=k+(q-1)i\in D'$. Then
		\[
		d=\sum_{j=0}^{m-1} 2^j (k_j+(q-1)i_j).
		\]
		For each $j$, there are three possible cases for $k_j+(q-1)i_j$, as follows.
		\begin{enumerate}
			\item $k_j=0,i_j=0$. Then $k_j+(q-1)i_j= 0$.
			\item $k_j=1, i_j=0$.  Then $k_j+(q-1)i_j= 1$.
			\item $k_j=1, i_j=1$.  Then $k_j+(q-1)i_j= q$.
		\end{enumerate}
		This implies $d \in E$ so that $D' \subseteq E$. Conversely, for each $j$, each element of $\{0,1,q\}$ can be rewritten in the form of $k_j+(q-1)i_j$ as above,  so that $E \subseteq D'$ and consequently $D'=E$. %\qed
	\end{proof}

\begin{theorem} \label{Eset} Let $H$ be a star-set. Then $H$ is a KM-arc of type $t$  with $t$-nucleus at $0$  if and only if $\pi_{e}(H) = 0$ for all $e \in E$.
%	\begin{enumerate}
%		\item $H$ is an oval with nucleus at $0$ if and only if $\pi_e(H)=0$ for all $e \in E$. 
%		\item $H$ is a hyperoval (KM-arc of type 2) if and only if $\pi_e(H)=0$ for all $e \in E$. 
%		\item $H$ is a KM-arc of type $t$  with $t$-nucleus at $0$  if and only if $\pi_{e}(H) = 0$ for all $e \in E$.
%	\end{enumerate} 
\end{theorem}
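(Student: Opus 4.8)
The plan is to deduce Theorem \ref{Eset} from Theorem \ref{Dset} (together with Lemma \ref{D=E}) by showing that the vanishing conditions $\pi_d(H)=0$ for $d \in D$ and the vanishing conditions $\pi_e(H)=0$ for $e \in E$ are equivalent for a star-set $H$. Since $D \subseteq D' = E$ (every element $iq+k-i = k + (q-1)i$ of $D$ arises with $i \preceq k$ and $1 \le k \le q-2$, hence lies in $D'$), one direction is immediate: if $\pi_e(H)=0$ for all $e \in E$, then in particular $\pi_d(H)=0$ for all $d \in D$, so $H$ is a KM-arc by Theorem \ref{Dset}. The work is all in the converse: assuming $H$ is a KM-arc of type $t$ with $t$-nucleus at $0$, I must show $\pi_e(H)=0$ for \emph{every} $e \in E$, not merely those coming from $D$.

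First I would identify exactly which elements of $E = D'$ are missing from $D$. Writing $e = k + (q-1)i$ with $i \preceq k$, $1 \le k \le q-1$, the set $D$ imposes the two extra restrictions $k \le q-2$ and $i \le \lfloor (k-1)/2 \rfloor$. So the potentially missing elements fall into two families: (a) those with $k = q-1$ (equivalently $e$ has all the low $m$ bits ``active'', i.e. $k_j=1$ for all $j$); and (b) those with $i > \lfloor(k-1)/2\rfloor$, i.e. $i$ is ``more than half'' of $k$ in the $\preceq$ sense. For family (b) the key observation is a symmetry: if $d = iq + (k-i)$ with $i \preceq k$, then the ``complementary'' exponent $d^* = (k-i)q + i$ also represents an element of $D'$ (swap the roles of $i$ and $k-i$; note $k-i \preceq k$ as well, since binary subtraction here is bitwise). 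Moreover $d^*$ is obtained from $d$ by the field automorphism $x \mapsto x^q$ applied ``inside'' the exponent — more precisely, $\pi_{d^*}(H) = \sum_y y^{d^*} = \sum_y (y^d)^{?}$ — so I would argue that $\pi_d(H) = 0 \iff \pi_{d^*}(H) = 0$. Concretely: $d^* \cdot 1 \equiv d \cdot q \pmod{q^2 - 1}$ is false in general, but $d + d^* = (k)(q+1)$, and since for $y \in H$... actually the cleanest route is: $\pi_{d^*}(H) = \sum_{y\in H} y^{(k-i)q + i} = \sum_{y \in H} \overline{y^{iq + (k-i)}} \cdot (\text{correction})$. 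Let me instead use that raising to the $q$-th power is additive and bijective on $K$: $\left(\pi_d(H)\right)^q = \sum_{y \in H} y^{dq}$, and $dq = (iq + k - i)q = iq^2 + (k-i)q \equiv i + (k-i)q = d^* \pmod{q^2-1}$. Hence $\pi_{d^*}(H) = \pi_d(H)^q$, so one vanishes iff the other does. This handles family (b): every such element is the ``$q$-power partner'' of an element of $D$ (one checks $i > \lfloor(k-1)/2\rfloor$ forces $k - i \le \lfloor(k-1)/2\rfloor$, at least when $k \le q-2$).

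The remaining case is family (a), where $k = q-1$, i.e. all low bits of the exponent are set. Here I would use the star-set hypothesis directly rather than Theorem \ref{Dset}. Since $H$ is a star-set, $H = \bigcup_{j} H_j$ is a disjoint union of the point sets on $q/t + 1$ lines through $0$, each $H_j \subseteq L_j \setminus \{0\}$ with $|H_j| = t$; writing $L_j = \lambda_j S_j$-style, each $H_j = \{c \, h : h \in A_j\}$ for a fixed direction representative and a $t$-subset $A_j$ of $F$ (up to the polar-coordinate bookkeeping). For an exponent $e$ with $k = q-1$ one has, on each line, control of $\sum_{y \in H_j} y^e$ via the fact that $y \mapsto y^{q-1} = \bar y / y = $ (a fixed unit depending only on the line) when $y$ ranges over $H_j$ on a single line through $0$ — the map $y \mapsto y^{q-1}$ is constant on each punctured line through the origin. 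Thus for $e = (q-1) + (q-1)i' $-type exponents the sum factors through the line structure and reduces to power sums of the $A_j$, which one bounds using that KM-arc lines meet $H$ evenly. Alternatively, and more in the spirit of the preceding proofs, I would mimic the argument of Theorem \ref{Dset}/\ref{thmbracket}: the only step that restricted $k$ to $\le q-2$ was that the equation $\langle v, x\rangle = \mu$ has $q$ (an even number of) ``ambient'' solutions, and the Vandermonde matrix on exponents $1, \dots, q-2$ was invertible; extending the exponent range to include $k = q-1$ requires checking that $\sum_{\mu \in \Omega}\mu^{q-1}$ adds no new obstruction, which holds because $\mu^{q-1} = 1$ for all $\mu \in F^*$ and $|\Omega|$ is even, so the extra equation is automatically $0 = 0$.

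The main obstacle I anticipate is handling family (a) cleanly — the elements $e \in E$ with $k = q-1$ genuinely are not controlled by Theorem \ref{Dset}, so one cannot simply quote it; one must either re-run the bilinear-form/Vandermonde argument with the exponent range widened to $1 \le k \le q-1$ and verify the $k=q-1$ row contributes nothing new (because $\mu^{q-1}=1$ on $F^*$ and $|\Omega|$ is even), or exploit the star-set line decomposition to reduce these sums to even power sums on individual lines. A secondary technical point is verifying the bookkeeping in family (b): that $i > \lfloor(k-1)/2\rfloor$ with $i \preceq k$ indeed forces the complement $k-i$ to satisfy $k - i \le \lfloor(k-1)/2\rfloor$ and $k - i \preceq k$, so that $(k-i)q + i$ is a bona fide element of $D$ whose $q$-th power partner is our $e$; this is a short binary-digit computation but needs the case $k = q-1$ excluded, which is exactly why families (a) and (b) must be treated separately. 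Once these are in place, the equivalence ``$\pi_d(H) = 0$ for all $d \in D$'' $\iff$ ``$\pi_e(H) = 0$ for all $e \in E$'' follows, and Theorem \ref{Eset} is immediate from Theorem \ref{Dset} and Lemma \ref{D=E}.
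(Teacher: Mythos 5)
Your proposal is correct and its skeleton matches the paper's: the easy direction is immediate from $D \subseteq D' = E$ and Theorem \ref{Dset}, and the converse is obtained by first extending the identity $\sum_{y\in H}\langle v,y\rangle^k=0$ to $k=q-1$ (which, as you note, costs nothing, since $\mu^{q-1}=1$ on $F^*$ and the number of points of $H$ off the line $\langle v,x\rangle=0$ is even) and then accounting for the exponents $iq+k-i$ with $i$ in the upper half of its range. The one genuine difference is how you handle that upper half: the paper invokes the second identity of Lemma \ref{bracketpower}, rewriting $P_k(v)$ as a sum over $\lceil(k+1)/2\rceil\le i\le k$ and reading off the vanishing of the corresponding coefficients, whereas you observe that $\bigl(iq+k-i\bigr)q\equiv (k-i)q+i \pmod{q^2-1}$, hence $\pi_{(k-i)q+i}(H)=\pi_{iq+k-i}(H)^q$, so the upper-half power sums vanish because their lower-half partners do. Your Frobenius argument is slightly more economical (it renders the second half of Lemma \ref{bracketpower} unnecessary), while the paper's route keeps everything inside the bilinear-form formalism; both are sound. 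Two small points to tidy: (i) for $k$ even the index $i=k/2$ lies in neither half-range, so you should note that $k/2\preceq k$ is impossible for $k\ge 1$ (equivalently $\binom{k}{k/2}$ is even, as the paper uses in Lemma \ref{bracketpower}), so no exponent is missed; (ii) your justification of the $k=q-1$ case is phrased via the set $\Omega$ from the \emph{converse} of Theorem \ref{thmbracket}, but what is needed is the forward implication --- each fibre $Y_\mu$ has even size because $H$ is a KM-arc --- which is the same parity observation applied in the other direction.
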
 
	\begin{proof} We note that $D \subset D'=E$. By Theorem \ref{Dset}, if $\pi_{e}(H) = 0$ for all $e \in E$, then $H$ is a $KM$-arc of type $t$ with $t$-nucleus at $0$. 
		
	Conversely, assume $H$ is a KM-arc of type $t$ with $t$-nucleus at $0$.  By Theorem \ref{thmbracket}, for each $v \in S$ and $1 \le k \le q-2$,
		\begin{equation}    \label{eqnbracket}
		\sum_{y \in H} \left\langle v, y\right\rangle^k=0.
		\end{equation} 
		For $k=q-1$, it can be checked that condition \eqref{eqnbracket} also holds. Using Lemma \ref{bracketpower}, for $1 \le k \le q-1$, 
		
\begin{eqnarray*}
\sum_{y \in H} \left\langle v, y\right\rangle^k
&=& \sum_{\substack{ i \preceq k \\0 \le i \le \lfloor(k-1)/2\rfloor }} \left\langle v^{iq+k-i},\pi_{iq+k-i}(H)\right\rangle \\
&=& \sum_{\substack{ i \preceq k \\ \lceil(k+1)/2\rceil \le i \le k}} \left\langle v^{iq+k-i},\pi_{iq+k-i}(H)\right\rangle=0.
\end{eqnarray*} 
Similar to the proof of Theorem \ref{Dset}, it follows that $\pi_{d}(H)=0$ for all $d \in D'$. 
By Lemma \ref{D=E}, $\pi_{e}(H)=0$ for all $e \in E$. %\qed
	\end{proof} 

In homogeneous coordinates,	
KM-arcs are considered  with the line at infinity  $L_\infty$ as a  $t$-secant. Under this setting,  G\'{a}cs and Weiner \cite{gacs2003} proved that the set of points on $L_\infty$ of a KM-arc is  a Vandermonde set.  In the following, we extend this result in polar coordinates. 

\begin{proposition} \label{tsecantvan} For $t \ge 2$, let  $\{y_i \mid 1 \le i \le t\} \subset K$ be the set of  points on a $t$-secant of a KM-arc $H$ with $t$-nucleus at $0$. Then the set
	$$
	\{ 1/y_i \mid 1 \le i \le t\}
	$$
	is a Vandermonde set.
\end{proposition}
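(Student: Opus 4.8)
The plan is to deduce the statement from the G\'acs--Weiner theorem quoted just above---that the points of a KM-arc lying on the line at infinity form a Vandermonde set whenever that line is a $t$-secant---by carrying the given $t$-secant onto the line at infinity. Since a two-element set is vacuously a Vandermonde set, we may assume $2<t<q$.

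First I would normalise the configuration. Every $t$-secant of $H$ passes through the $t$-nucleus $0$, and the lines through $0$ in $K=AG(2,q)$ are precisely the sets $Fu$ with $u\in S$ (a line through $0$ is $L(u,0)=\{x\mid\langle u,x\rangle=0\}$, which contains $Fu$ and has $q$ elements). Hence the given $t$-secant is $L=Fu_1$ for some $u_1\in S$, and the points of $H$ on it are $y_i=\lambda_i u_1$ with $\lambda_1,\dots,\lambda_t\in F^*$ pairwise distinct. From $u_1\bar{u}_1=1$ we get $1/y_i=\lambda_i^{-1}\bar{u}_1$, so $\sum_i(1/y_i)^k=\bar{u}_1^{k}\sum_i\lambda_i^{-k}$ for every $k$; thus it is enough to show that $\{\lambda_i^{-1}\mid 1\le i\le t\}\subseteq F$ is a Vandermonde set. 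Regarding the projective line $L$ as $PG(1,q)$ through the parameter $\lambda$ (with the affine point $\lambda u_1$ carrying parameter $\lambda$ and the infinite point $(u_1:0)$ carrying $\infty$), the substitution $\lambda\mapsto 1/\lambda$ is a projective coordinate on $L$ that sends the $t$-nucleus $0$ to $\infty$, and in it the points of $H$ on $L$ have coordinates $\lambda_i^{-1}$.

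Next I would upgrade the G\'acs--Weiner theorem to a coordinate-free form. Together with Lemma \ref{vanaffine} (usable since $t$ is even) it yields: for every KM-arc, every one of its $t$-secants $M$ with $t$-nucleus $N\in M$, and every projective coordinate on $M$ placing $N$ at infinity, the $t$ points of $H$ on $M$ form a Vandermonde set. Indeed, a collineation of $PG(2,q)$ carrying $M$ onto the line at infinity (such a collineation exists, as the collineation group is transitive on lines) reduces this to the quoted result, while two such coordinates on $M$ differ by an affine substitution fixing $\infty$, under which Lemma \ref{vanaffine} preserves the Vandermonde property. Applying this to $M=L$ with the coordinate $\lambda\mapsto 1/\lambda$ shows that $\{\lambda_i^{-1}\}$ is a Vandermonde set, whence $\{1/y_i\}$ is one too, by the identity in the previous paragraph.

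The essential content is thus borne entirely by the homogeneous result of G\'acs and Weiner; what needs care is the bookkeeping---that the $t$-nucleus is a collineation invariant (being the common point of all $t$-secants) and that being a ``Vandermonde set on a $t$-secant with the nucleus at infinity'' does not depend on the remaining affine freedom, both of which rest on $t$ being even. I expect the genuine difficulty to appear only if one wants a self-contained polar proof instead: writing $H=\bigcup_j\Lambda_j u_j$ and using Theorem \ref{Eset}, being a KM-arc is equivalent to the identities $\sum_j u_j^{k-2r}\pi_k(\Lambda_j)=0$ for $1\le k\le q-1$ and $r\preceq k$, and extracting from these that $\pi_m(\Lambda_1)=0$ for $q-t+1\le m\le q-2$---which is exactly what ``$\{1/y_i\}$ is a Vandermonde set'' amounts to here---calls for a Vandermonde-type elimination that is delicate in characteristic $2$, which is why reduction to the already-known homogeneous case is the route I would take.
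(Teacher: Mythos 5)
Your proposal is correct and follows essentially the same route as the paper: normalise so the $t$-secant becomes $Fu_1$ (reducing $\{1/y_i\}$ to $\{\lambda_i^{-1}\}\subset F$), move that secant to the line at infinity with the nucleus going to a point at infinity, and invoke the G\'acs--Weiner result together with Lemma \ref{vanaffine}. The only difference is cosmetic: the paper exhibits an explicit collineation $(x:y:z)\mapsto(x+\mu y:z:y)$ sending $(y_i:0:1)$ to $(1:1/y_i:0)$, whereas you argue abstractly via transitivity of the collineation group and affine-invariance of the Vandermonde property.
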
 

\begin{proof}  We identify each point $z=z_1+z_2\mathbf{i} \in K$ as $(z_1:z_2:1)$ in homogeneous coordinates.  		  We note that the Vandermonde property is invariant under affine transformations by Lemma \ref{vanaffine}. 
	Under a suitable map $x\mapsto \alpha x, \alpha \in K$, we can assume that the $x$-axis is a $t$-secant of $H$. This means we can choose the set $\{y_i \mid 1 \le i \le t\}$ on the $x$-axis, so that  $y_i\in F$ and each $y_i$ is identified as $(y_i:0:1)$.
	
	Assume the line $x=\mu y$ is a $t$-secant of $H$. Let $\varphi$ be the collineation defined by
	\[
	\varphi((x:y:z))= (x+\mu y:z:y). 
	\]
	Under $\varphi$, each point $(y_i:0:1)$ is mapped to $(1:1/y_i:0)$ and the $t$-nucleus $(0:0:1)$  is mapped to $(0:1:0)$. 
	Also,  the $t$-secant  $x=\mu y$  is mapped to the $y$-axis. 
	By \cite[Proposition 2.4]{gacs2003}, the set 	$
	\{ 1/y_1, \cdots, 1/y_t\}
	$
	is a Vandermonde set. %\qed 	 
\end{proof}

\section{KM-arcs from hyperovals and KM-arcs} 

In this section we construct KM-arcs starting  from KM-arcs in smaller dimensions. 
Let $r=2^h$, where $h \mid m$.   Let $F'=\mathbb{F}_{r}$, and $K'=\mathbb{F}_{r^2}$. We denote the group of $(r+1)$-st roots of unity in $K'$ by $S'$. 

We  recall the relative trace map $Tr_{F/F'}: F \rightarrow F'$, where
$$
Tr_{F/F'}(x)=x^{q/r}+x^{q/r^2}+ \cdots +x^r+x.
$$
Let $V_1:= \{ x \in F \mid Tr_{F/F'}(x)=1 \}$.  The main result of this section is the following.

\begin{theorem} \label{construction4} 
	Assume $m/h$ is odd.  
	Let $H' \subset K' $ be a $(r+s,s)$-arc of type $s \ge 1$ with $s$-nucleus at $0$.  Let $V_c:= cV_1$ for some $c \in K^*$.  Then $H:= \{\lambda u \mid 1/\lambda \in V_c,u \in H'\}$ is a $(q+sq/r,sq/r)$-arc in $K$ of type $t=sq/r$ with $t$-nucleus at $0$. 
\end{theorem}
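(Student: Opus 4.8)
The plan is to verify directly that the candidate set $H=\{\lambda u \mid 1/\lambda\in V_c,\ u\in H'\}$ is a star-set over $F$ (in the sense of Definition \ref{starset}) and then apply the power-sum criterion of Theorem \ref{Eset}. First I would count: $H'$ lies on $\tfrac{r}{s}+1$ lines of $K'$ concurrent at $0$, each containing $s$ points, so the set of polar directions $\{u/|u| : u\in H'\}\subseteq S'$ — more precisely the normalized points — splits $H'$ into $\tfrac{r}{s}+1$ "spokes". The set $V_1$ is the affine hyperplane $Tr_{F/F'}(x)=1$ in $F$, so $|V_1|=q/r$; hence $|V_c|=q/r$ and $|H|=(q+sq/r)$, matching the claimed parameters, with each spoke of $H$ sitting on a line through $0$ and containing $s\cdot q/r = sq/r = t$ points. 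The delicate point in this counting step is to check that distinct pairs $(\lambda,u)$ give distinct points and that the $\tfrac rs+1$ lines of $K$ through $0$ that arise are genuinely distinct; this uses that $c^{-1}$ times the directions of the $H'$-lines land in $\tfrac rs+1$ distinct $F$-cosets, which should follow from $H'$ being a star-set over $F'$ together with $F'\subseteq F$.

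Next I would compute the power sums $\pi_e(H)=\sum_{y\in H} y^e$ for $e\in E$ and show they vanish. Writing $y=\lambda u$ with $1/\lambda\in V_c$ and $u\in H'$, we get
\[
\pi_e(H)=\Bigl(\sum_{1/\lambda\in V_c}\lambda^e\Bigr)\Bigl(\sum_{u\in H'}u^e\Bigr),
\]
so it suffices to show that for each $e\in E$ at least one of the two factors is zero. The exponents $e\in E$ have binary digits $x_j\in\{0,1,q\}$ in the "base-$q$-digit" expansion; I would split into cases according to whether $e$, reduced appropriately, forces the $H'$-factor to be a power sum of $H'$ over an exponent in the set $E'$ associated to $K'$ (in which case Theorem \ref{Eset} applied to $H'$ — an $(r+s,s)$-arc — kills it), or whether instead the $\lambda$-factor $\sum_{1/\lambda\in V_c}\lambda^e$ vanishes. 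For the $\lambda$-factor, substituting $\lambda=c/w$ with $w\in V_1=\{Tr_{F/F'}=1\}$, we reduce to understanding $\sum_{w\in V_1} w^{-e}$; here the hypothesis that $m/h$ is odd is essential, and the vanishing should come from the fact that $V_1$ is a coset of the additive subgroup $\ker Tr_{F/F'}$, which is a Vandermonde set (as recalled after Lemma \ref{vanaffine}), combined with Lemma \ref{vanaffine} to handle the additive shift.

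I expect the main obstacle to be the bookkeeping that separates the two cases cleanly: one must show that every $e\in E$ either has the property that its "residue" controlling the $H'$-sum lies in the exponent set for which Theorem \ref{Eset} guarantees $\pi(H')=0$, or else its reduction controlling the $V_c$-sum falls in the range where the Vandermonde property of $\ker Tr_{F/F'}$ (a subgroup of size $q/r$) forces $\sum_{w\in V_1} w^{-e}=0$ — and this dichotomy is exactly where "$m/h$ odd" must be used, presumably because it guarantees $Tr_{F/F'}$ restricted to $F'$ is the identity, or controls the size $q/r$ modulo the relevant exponents. Once both factorization cases are settled, Theorem \ref{Eset} immediately yields that $H$ is a KM-arc of type $t=sq/r$ with $t$-nucleus at $0$, which is the claim.
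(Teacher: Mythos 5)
Your overall strategy coincides with the paper's: reduce to $c=1$, check that $H$ is a star-set, factor each power sum as
\[
\pi_d(H)=\Bigl(\sum_{1/\lambda\in V_1}\lambda^{d}\Bigr)\,\pi_d(H')=\pi_{q-1-k}(V_1)\,\pi_d(H')\qquad (d=k+(q-1)i\in D'),
\]
and invoke Theorem \ref{Eset} twice (once for $H$, once for $H'$). However, the dichotomy you defer as ``bookkeeping'' is the actual content of the proof, and the tool you propose for the $\lambda$-factor is not strong enough. The Vandermonde property of $V_1$ (a coset of $\ker Tr_{F/F'}$, of size $q/r$) only yields $\pi_j(V_1)=0$ for $1\le j\le q/r-2$, whereas the exponent $q-1-k$ ranges over all of $\{1,\dots,q-2\}$. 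What the paper needs, and proves in Lemma \ref{genTset} by expanding $x^{(q/r)l+s}$ via the identity $x^{q/r}=x^{q/r^2}+\cdots+x+1$ on $V_1$, is the sharp statement that $\pi_j(V_1)\ne 0$ forces $j=q-1-\sum_{l}2^l a_l$ with $a_l\in\mathcal{A}=\{0,1,r,\dots,q/r\}$. For exactly those exceptional $k$ one must then show $\pi_d(H')=0$; this is Lemma \ref{Usetd}, which reduces $d$ modulo $r^2-1$ and checks that the reduced exponent lands in the set $E_r$ attached to $K'$, so that Theorem \ref{Eset} applied to the small arc $H'$ applies. Neither step follows from the facts you cite, so as written the argument has a genuine gap at its core.

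A secondary but concrete error: the hypothesis that $m/h$ is odd is not what makes the $\lambda$-sums vanish (the power-sum lemmas hold for any $m/h$). Its role is in the star-set verification: it guarantees $F\cap K'=F'$, so that two points of $H'$ on distinct $s$-secants $xF'$, $yF'$ of $K'$ satisfy $x/y\notin F$ and hence lie on distinct lines $xF$, $yF$ of $K$. If $m/h$ were even one would have $K'\subseteq F$ and all of $H'$ would collapse onto a single line of $K$ through $0$. The other half of the star-set check --- that each line $\alpha F$ really carries $sq/r$ distinct points --- comes from $Tr_{F/F'}(\lambda x)=x\,Tr_{F/F'}(\lambda)=x$ for $x\in F'$ and $1/\lambda\in V_1$, which separates the points $x\alpha/\lambda_1$ and $y\alpha/\lambda_2$; this is more specific than the coset argument you sketch.
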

In particular, 

1) if $H'$ is an oval with $1$-nucleus at $0$, then $H$ is a $(q+q/r,q/r)$-arc of type $t=q/r$ with $t$-nucleus at $0$.

2) if $H'$ is a hyperoval not containing $0$, then $H$ is a $(q+2q/r,2q/r)$-arc of type $t=2q/r$ with $t$-nucleus at $0$.

The proof of Theorem \ref{construction4} is presented at the end of this section. 
In Lemmas \ref{Usetd} and \ref{genTset} we examine special power sums of the sets $H'$ and $V_1$. 

 Let
\[
\mathcal{A}:= \left\{ \dfrac{q}{r^j} \mid 1 \le j \le m/h \right\} \cup \{0\} = \left\{1,r,r^2, \cdots, \dfrac{q}{r} \right\} \cup \{ 0\}. 
\]

\begin{lemma} \label{Usetd} 
Let $k=\sum_{j=0}^{h-1}2^ja_j >0,$ where $a_j \in \mathcal{A}$. Let $d=k+(q-1)i$, for some $i \preceq k$.
	Then $\pi_d(H')=0$. 
\end{lemma}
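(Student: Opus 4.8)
The plan is to reduce the power sum $\pi_d(H')$ over the large field $K$ to a power sum over the smaller field $K'$, to which Theorem~\ref{Eset} (applied to $H'$ in $PG(2,r)$) can be applied. First I would observe that every element of $H'$ lies in $K'$, so $\pi_d(H') = \sum_{y \in H'} y^d$ is really a sum in $K'$; since $K'^* $ has order $r^2-1$, only the residue $d \bmod (r^2-1)$ matters. The key arithmetic fact to establish is that for $d = k + (q-1)i$ with $k = \sum_{j=0}^{h-1} 2^j a_j$, $a_j \in \mathcal{A}$, and $i \preceq k$, the reduced exponent $d \bmod (r^2-1)$ again has the shape $k' + (r-1)i'$ with $1 \le k' \le r-1$ and $i' \preceq k'$, i.e. it lands in the set $D'$ (equivalently $E$, by Lemma~\ref{D=E}) associated to the field pair $(F', K')$. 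Once that is shown, Theorem~\ref{Eset} for the star-set $H'$ in $K' = AG(2,r)$ gives $\pi_d(H') = 0$ directly.

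The heart of the matter is therefore the exponent bookkeeping. I would write $k = \sum_{j=0}^{h-1} 2^j a_j$ where each $a_j \in \{0, 1, r, r^2, \ldots, q/r\} = \{0\} \cup \{r^s : 0 \le s \le m/h - 1\}$ (using $r = 2^h$, so $r^s = 2^{hs}$). Thus $2^j a_j$ is either $0$ or $2^{j + hs}$ for some $0 \le s \le m/h-1$, and as $j$ ranges over $0,\dots,h-1$ and $s$ over $0,\dots,m/h-1$ the exponents $j+hs$ range over all of $0,\dots,m-1$ without repetition. Hence $k$, written in binary over $K$, is an arbitrary $m$-bit number, but the bits are organized into $h$ "columns" of $m/h$ bits each. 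The condition $i \preceq k$ with $i = \sum 2^j i_j$, $i_j \preceq a_j$, means in each column $i$ either matches $a_j$ (when $a_j \ne 0$) or is zero. Now modulo $r^2 - 1 = 2^{2h}-1$, the Frobenius-type reduction acts by folding the binary expansion into blocks of length $h$ and summing them; concretely $2^{j+hs} \equiv 2^{j + h(s \bmod 2)} \pmod{r^2-1}$. I would track how $k$ and $(q-1)i$ fold, show the two contributions in each $h$-block combine (using $m/h$ odd, which controls the parity $s \bmod 2$ and prevents unwanted carries from $a_j$ and $i_j$ colliding), and conclude that $d \bmod (r^2-1)$ is of the form $k' + (r-1)i'$ with $i' \preceq k'$; some care is needed when $d \equiv 0$, in which case $\pi_d(H') = |H'| = r + s$, but since $r$ is even this vanishes in characteristic $2$ exactly when $s$ is even, so I should check that the only way to get $d \equiv 0$ forces $k' = 0$, which by the structure of $E$ (elements are $>0$) cannot arise for $d \in D'$, meaning this degenerate case does not actually occur for our $d$.

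The main obstacle I anticipate is precisely this modular folding argument: showing rigorously that reducing $d = k + (q-1)i \bmod (r^2-1)$ preserves the "mask" relation $i' \preceq k'$ without carries spoiling it. The hypothesis that $m/h$ is \emph{odd} should be exactly what is needed — it guarantees that within each $h$-column the $m/h$ positions $\{j, j+h, \ldots, j+(m/h-1)h\}$ distribute with one more position in the "even-$s$" residue class than the "odd-$s$" class (or vice versa), and the contributions of $k$ and of $(q-1)i = qi - i$ to a given block fit together cleanly. After establishing the exponent claim, the rest is immediate: apply Theorem~\ref{Eset} to the star-set $H' \subset K' = AG(2,r)$ (noting that a $(r+s,s)$-arc of type $s$ with $s$-nucleus at $0$, including the oval case $s=1$ and hyperoval case $s=2$, is a star-set in $AG(2,r)$), obtaining $\pi_e(H') = 0$ for all $e$ in the $E$-set of $K'$, and since $d \bmod (r^2-1)$ lies in that set we get $\pi_d(H') = 0$.
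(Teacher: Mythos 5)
Your proposal is correct and follows essentially the same route as the paper: reduce the exponent $d=\sum_j 2^j\bigl(a_j+(q-1)i_j\bigr)$ modulo $r^2-1$, observe that each coefficient $a_j+(q-1)i_j$ reduces to an element of $\{0,1,r\}$ so that $d$ is congruent to an element of the set $E_r$ attached to the pair $(F',K')$, and then invoke Theorem~\ref{Eset} for $H'$ in $K'$. The only remark is that the ``folding'' step you flag as the main obstacle is simpler than you anticipate --- the reduction is done termwise on each coefficient $a_j+(q-1)i_j\in\{a_j,qa_j\}$ with no carry issues, and the hypothesis that $m/h$ is odd is not actually needed for this lemma (it is used elsewhere in the proof of Theorem~\ref{construction4}).
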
	
	\begin{proof} 
		Let 
		\[
		D'_r:=\{l+(r-1)i \mid 1 \le l \le r-1, i \preceq l \}.
		\]
		As in Lemma \ref{D=E}, $D'_r$ can also be expressed as 
		\[
		E_r := \left\{ \sum_{j=0}^{h-1} 2^j x_j >0\mid x_j \in \{0,1,r\}\right\}.
		\]  
		 Let $i \preceq k$. We rewrite $i=	\sum_{j=0}^{h-1} 2^ji_j,$ where $i_j \in \{0,a_j\}$.
			Then 
			\[
			d=k+(q-1)i = \sum_{j=0}^{h-1} 2^j\left(a_j+(q-1)i_j\right).
			\] 
			For each $j$, we note that $a_j+(q-1)i_j \in \{a_j,qa_j\}$. Since $q \pmod{r^2-1} \in \{1,r\}$ and $a_j \pmod{r^2-1} \in \{0,1,r\}$, it follows that 
			\[
			a_j+(q-1)i_j \pmod{r^2-1} \in \{0,1,r\}. 
			\]
			Then, for $u \in K'$, there exists $d' \in D'_r=E_r$ such that
			\[
			u^d=u^{k+(q-1)i}=u^{d'}. 
			\]
			From Theorem \ref{Eset},
			\[
			\pi_d(H')= \pi_{d'}(H')=0. %\qed
			\] 
	\end{proof}

%\begin{lemma} \label{q/ttr1full} \label{genTset}
\begin{lemma}  \label{genTset}
Let  $1 \le  k   \le q-2$. 
	If $\pi_k(V_1) \ne 0$, then 
	%	\[
	%	k= \dfrac{q}{t}(l+1)-1-\sum_{i=0}^{h-2}2^il_ia_i 
	%	\]
	\[
	k= q-1 - \sum_{j=0}^{h-1} 2^ja_j,
	\]
	% 	or 
	% 	\[
	% 	k= \dfrac{q}{2}-1-3x
	% 	\]
	where $a_j \in \mathcal{A}$ for each $0 \le j\le h-1$. 
	In this case, $\pi_k(V_1) = 1$.
	\end{lemma}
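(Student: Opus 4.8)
The plan is to compute the power sum $\pi_k(V_1)$ directly using the description $V_1 = \{x \in F \mid Tr_{F/F'}(x) = 1\}$. The standard trick is to express the indicator function of $V_1$ additively: since $Tr_{F/F'}$ is an $\mathbb{F}_{r}$-linear surjection $F \to F'$, the set $V_1$ is a coset of the hyperplane $V_0 = \ker Tr_{F/F'}$, which is an additive subgroup of $F$ of size $q/r$. So I would write
\[
\pi_k(V_1) = \sum_{x \in F} \mathbf{1}[Tr_{F/F'}(x) = 1]\, x^k,
\]
and replace the indicator by a sum over additive characters of $F'$, or more elementarily, use that the polynomial $\prod_{c \in F', c \ne 1}(Tr_{F/F'}(x) - c)$ singles out $V_1$ (up to a constant). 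First I would fix a coset representative $x_0 \in V_1$ so that $V_1 = x_0 + V_0$, expand $x^k = (x_0 + y)^k$ for $y \in V_0$ by the binomial theorem (valid termwise in characteristic $2$), and reduce to evaluating the plain additive power sums $\sum_{y \in V_0} y^j$ over the additive subgroup $V_0$.

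The key fact I would invoke is the well-known evaluation of power sums over an $\mathbb{F}_2$-subspace: for an additive subgroup $V_0 \le F$ of order $2^s$ (here $s = m - h$), one has $\sum_{y \in V_0} y^j = 0$ unless $j$ is divisible by a specific structure — precisely, $\sum_{y \in V_0} y^j \ne 0$ only when $j$ has the form making it detect the "top" of the subspace, and in the relevant range the only surviving case is $j = q/r \cdot (\text{something})$; in fact for $V_0$ of dimension $s$ over $\mathbb{F}_2$ the sum $\sum_{y\in V_0} y^j$ is nonzero iff $j$ is a positive multiple of $2^s - $ type exponents, but cleanest is: $\sum_{y \in V_0} y^{j} = 0$ for $1 \le j \le |V_0| - 2$ (the Vandermonde-set property, already noted in the excerpt as "any additive subgroup of $GF(q)$ is a Vandermonde set"), and it equals a determinable nonzero value (essentially a Moore/Dickson determinant expression) at $j = |V_0| - 1$ and certain larger exponents. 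Combining this with the binomial expansion, the constraint "$\pi_k(V_1) \ne 0$" forces the binary digits of $k$ to split across the blocks governed by $Tr_{F/F'}$ in exactly the pattern $k = q - 1 - \sum_{j=0}^{h-1} 2^j a_j$ with $a_j \in \mathcal{A}$, because $\mathcal{A} = \{0, 1, r, r^2, \dots, q/r\}$ records precisely which powers of $2$ within each of the $h$ "columns" of the $F/F'$-base-$r$ digit decomposition of $F$ are allowed; the hypothesis $m/h$ odd should enter in guaranteeing $q \equiv r$ or $1 \pmod{r^2-1}$ style reductions (as used in Lemma \ref{Usetd}) so the exponent bookkeeping closes up, and the value then computes to $1$.

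Concretely, the steps in order: (1) write $V_1 = x_0 + V_0$ and expand $\pi_k(V_1) = \sum_{i \preceq k}\binom{k}{i} x_0^{k-i}\big(\sum_{y\in V_0} y^i\big)$, keeping only $i \preceq k$ since $\binom{k}{i}$ is odd exactly then (Lucas); (2) recall/prove that $\sum_{y \in V_0} y^i = 0$ for $0 < i < |V_0|-1$ and identify the exponents $i$ for which it is nonzero — these are governed by writing $F = F' \oplus (F'\text{-complement})$ and Frobenius orbits, giving nonzero contributions only for $i$ a sum of the "large" $2$-power digits; (3) match the surviving exponents against the claimed form $k = q-1-\sum 2^j a_j$, using $q = r^{m/h}$ and the reductions mod $r^2 - 1$ with $m/h$ odd; (4) verify the surviving coefficient, including $\binom{k}{i} \bmod 2$ and the value of the nonzero subgroup power sum, multiplies out to $1$. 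The main obstacle I expect is step (2)–(3): pinning down exactly which additive power sums $\sum_{y\in V_0} y^i$ are nonzero and translating that spectral condition into the clean digit description $a_j \in \mathcal{A}$; this requires a careful choice of $\mathbb{F}_{r}$-basis of $F$ adapted to the relative trace (so that $V_0$ is a coordinate subspace) and then a Moore-determinant / Frobenius-orbit computation of power sums over a coordinate subspace, after which the bookkeeping — and the fact that the value is exactly $1$ rather than some other field element — falls out.
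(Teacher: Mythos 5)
Your overall strategy (reduce to power sums over the kernel $V_0=\ker Tr_{F/F'}$) is reasonable in outline, but the proposal has a genuine gap exactly where the lemma's difficulty lies, namely your steps (2)--(3). After the coset expansion $\pi_k(V_1)=\sum_{i\preceq k}\binom{k}{i}x_0^{k-i}\sum_{y\in V_0}y^i$, the exponents $i$ range up to $k\le q-2$, far beyond $|V_0|-1=q/r-1$, so the Vandermonde property of $V_0$ only kills the small exponents; the nonvanishing pattern of $\sum_{y\in V_0}y^i$ for $q/r\le i\le q-2$ is precisely the hard content of the lemma, and you defer it to an unspecified ``Moore-determinant / Frobenius-orbit computation.'' Moreover, your expansion introduces a dependence on the coset representative $x_0$ and admits several simultaneously nonzero terms, so even with the full spectrum of $\sum_{y\in V_0}y^i$ in hand you would still need to rule out cancellation among them and to show the total is exactly $1$; none of this is carried out. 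A minor but telling misstep: you invoke the hypothesis that $m/h$ is odd, but Lemma \ref{genTset} neither assumes nor needs it (it is also used in Section 5, where no such parity assumption is made); the reductions modulo $r^2-1$ belong to Lemma \ref{Usetd}, not here.

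For comparison, the paper avoids the coset decomposition entirely and works directly with elements of $V_1$: it writes $k=(q/r)l+s$ with $0\le l\le r-1$, $0\le s\le q/r-1$, and uses the defining relation $x^{q/r}=x^{q/r^2}+\cdots+x+1$ (valid for $x\in V_1$ since $Tr_{F/F'}(x)=1$) to replace each block of $q/r$ in the exponent by lower-degree terms, recursively pushing every contribution either into the Vandermonde range $1\le j\le q/r-2$ (where $\pi_j(V_1)=0$, since $V_1$ is a translate of the additive subgroup $V_0$ of even size) or onto the single surviving exponent $q/r-1$, whose power sum is computed to be $1$ from the same trace identity together with the fact that $V_0\cup V_1$ is itself an additive subgroup. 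This recursive exponent reduction is what produces the clean digit description $k=q-1-\sum_j 2^j a_j$ with $a_j\in\mathcal{A}$; if you want to rescue your route, you would need to prove the analogous explicit description of the exponents $i$ with $\sum_{y\in V_0}y^i\ne 0$ over the full range and then control the $x_0$-dependent cross terms, which amounts to redoing the paper's computation in a less convenient coordinate system.
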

	\begin{proof}  Rewrite $k$ in the form
		\[
		k=\dfrac{q}{r}l+s,
		\]
		where $ 0 \le l \le r-1$,
		and $0 \le s \le q/r-1$.  We consider three cases depending on $l$. 
		\begin{enumerate}
			\item $l=0$. 
			Then $1 \le k \le q/r-1$. Let $V_0:= \{ x \in F \mid Tr_{F/\mathbb{F}_r}(x)=0 \}$, which is an additive subgroup of $F$ and hence a Vandermonde set of size $q/r$.  Since $V_1$ is a translation of $V_0$, by Lemma \ref{vanaffine},  it is also a Vandermonde set of size $q/r$.
			
			It follows that $\pi_k(V_1) \ne 0$ if and only if 
			\[
			k= \dfrac{q}{r}-1 = q-1-\sum_{j=0}^{h-1}2^j \dfrac{q}{r}.
			\]
			 For $x \in V_0\backslash \{0\}$, 
			\[
			x^{q/r-1}= (x^{q/r^2}+ \cdots +x^r+x) x^{-1}=x^{q/r^2-1}+ \cdots +x^{r-1}+1,
			\]
			and so $\pi_{q/r-1}(V_0)=1$. Note that the set $V_0\cup V_1$ is a Vandermonde set of size $2q/r$, so that $\pi_{q/r-1}(V_0\cup V_1)=0$. Therefore, $\pi_{q/r-1}(V_1)=\pi_{q/r-1}(V_0)=1$. 
			\item $l=1$. Then $k=q/r+s$, and for $x \in V_1$, we have
			\[
			x^k = x^{q/r}x^s=\left( x^{q/r^2}+ \cdots +x+1 \right)x^s. 
			\]
			To evaluate the sum $\sum\limits_{x \in V_1} x^k$, we consider three cases of each sum $\sum\limits_{x \in V_1} x^{q/r^i+s}$, depending on $2 \le i \le m/h$.
			\begin{enumerate}
				\item $q/r^i+s< q/r-1.$ Then
				$
				\sum\limits_{x \in V_1} x^{q/r^{i}+s}=0.
				$ 
				\item $q/r^i+s= q/r-1.$ Then
				$
				\sum\limits_{x \in V_1} x^{q/r^{i}+s}=1.
				$ 
				\item   $q/r^i+s>q/r-1$. Rewrite 
				$
				\dfrac{q}{r^i}+s=\dfrac{q}{r}+s_0,
				$
				where
				$
				0 \le s_0 \le \dfrac{q}{r^i}-1 .
				$
				Then
				$$
				x^{q/r^{i}+s} = x^{q/r+s_0}=\left(x^{q/r^2}+ \cdots +x+1\right)  x^{s_0}, 
				$$
				and since for each $2 \le i' \le m/h$,   
				$$
				\dfrac{q}{r^{i'}}+s_0 \le \dfrac{q}{r^{i'}}+\dfrac{q}{r^i} -1<\dfrac{q}{r}-1,
				$$
				it follows that 
				$$
				\sum_{x \in V_1} x^{q/r^{i}+s}=0.
				$$
				%\item For $i=k_0$, 
				%%$$
				%%x^{q/4^{k_0}+A+b} = x^{q/4+b}=\left(x^{q/4^2}+ \cdots +x+1\right)  x^b =0.
				%%$$
				
			\end{enumerate}
			Therefore $\pi_k(V_1) \ne 0$ if and only if $\pi_k(V_1)=1$.  This occurs when
			\[
			s = \dfrac{q}{r}-\dfrac{q}{r^i}-1,
			\]
			if and only if
			\[
			k = 2\dfrac{q}{r}- \dfrac{q}{r^i}-1 = q-1- \left(\sum_{j=1}^{h-1}2^j\dfrac{q}{r}+\dfrac{q}{r^i}\right). 
			\]
			\item $l>1$. For $x \in V_1$, we have
			\begin{align*}
			x^k &= x^{lq/r}x^s = \left(x^{q/r^2}+\cdots x+1\right)^lx^s. 
			\end{align*}
			Let $x^c$ be a term of the expansion of the above. Rewrite $l$ as the binary expansion 
			$
			l= \sum\limits_{j=0}^{h-1}2^jl_j,
			$
			where $l_j\in\{0,1\}$, for $0 \le j \le h-1$. Then
			\[
			c=\sum_{j=0}^{h-1}2^jl_ja_j+s,
			\]
			where $a_j \in \mathcal{A} \backslash \left\{ \dfrac{q}{r} \right\}.$
			From the conditions on $l$ and $s$, we have
			\[
			c \le	\dfrac{q}{r^2}l+s \le \dfrac{q}{r^2}\left(r-1\right)+\dfrac{q}{r}-1=   2\dfrac{q}{r}-\dfrac{q}{r^2}-1.
			\]
			Furthermore, $c \ne 2\dfrac{q}{r}-\dfrac{q}{r^2}-1$, otherwise $l=r-1, s=\dfrac{q}{r}-1$ and $k=q-1$, which is excluded from our assumption.   From part 1) and part 2),  $\pi_c(V_1) \ne 0$  if and only if $\pi_c(V_1)=1$. This occurs when $c=q/r-1$,  if and only if
			\[
			s=\dfrac{q}{r}-1-\sum_{j=0}^{h-1}2^jl_ja_j,
			\]
			if and only if
			%	\[
			%k= \dfrac{q}{t}(l+1)-1-\sum_{i=0}^{h-2}2^il_ia_i.
			%\]
			%
			%We note that 
			\begin{align*}
			k &= \dfrac{q}{r}(l+1)-1-\sum_{j=0}^{h-1}2^jl_ja_j\\
			&= \dfrac{q}{r}-1+ \sum_{j=0}^{h-1}2^jl_j\dfrac{q}{r}-\sum_{j=0}^{h-1}2^jl_ja_j \\
			&= \dfrac{q}{r}-1+\sum_{j=0}^{h-1}2^j\dfrac{q}{r}     -\sum_{j=0}^{h-1}2^j(1-l_j)\dfrac{q}{r}-\sum_{j=0}^{h-1}2^jl_ja_j \\
			&=q-1     -\sum_{j=0}^{h-1}2^j\left[(1-l_j)\dfrac{q}{r}+l_ja_j\right]\\
			&= q-1 - \sum_{j=0}^{h-1} 2^ja'_j,
			\end{align*}
			where $a'_j \in \mathcal{A}$.  
		\end{enumerate}
		The proof now follows. %\qed
	\end{proof}

	\begin{proof}[Proof of Theorem \ref{construction4}]
		 We will assume that $c=1$ since multiplication by $c \in K^*$ is a collineation of $PG(2,q)$. 	
		 \begin{enumerate}
			\item   
			Let $x,y$  be two points  from $H'$ such that the lines $xF'$ and $yF'$ are distinct in $K'$.   This implies $x/y \not \in F'$. But since $m/h$ is odd,  we have $F \cap K' = F'$.  Hence  $x/y \not \in F$, and so the lines $xF$ and $yF$ are distinct in $K$. 
			
			Assume $s \ge 2$. For $x,y \in F', x \ne y$, we consider two distinct points $x\alpha,y\alpha$  from the same $s$-secant $\alpha F'$ of $H'$ in $K'$. Let $\lambda_1,\lambda_2 \in V_1$. Since $ Tr_{F/F'}(\lambda_2 x)=x,  Tr_{F/F'}(\lambda_1 y)=y$,  it follows that
			$
			\lambda_2 x \ne \lambda_1 y, 
			$
			and so
			\[
			\dfrac{x\alpha}{\lambda_1} \ne \dfrac{y\alpha}{\lambda_2}. 
			\]
			This implies the line  $\alpha F$ in $K$ contains  $t=sq/r$ distinct points of $H$.  Hence, $H$ is a star-set. 
			\item 	Let $d =k+(q-1)i \in D'$. For each $\lambda \in F$, we have
			\[
			1/\lambda^d = \lambda^{q-1-d} = \lambda^{q-1-k},  
			\]
			so that   
			\[
			\pi_d(H)=\pi_d(H')\sum_{1/\lambda \in V_1}\lambda^d= \pi_{q-1-k}(V_1)\pi_d(H'). 
			\]
			From Lemma \ref{genTset},  if $\pi_{q-1-k}(V_1)\ne 0$, then
			% 		$q-1-k$ has the form
			\[
			k= \sum_{i=0}^{h-1} 2^ia_i,
			\]
			where $a_i \in \mathcal{A}$ for each $0 \le i \le h-1$. 
			By Lemma \ref{Usetd},  $\pi_d(H')=0$. 
This implies $\pi_d(H)=0$, and by Theorem \ref{Eset}, $H$ is a KM-arc of type $t$ with $t$-nucleus at $0$.  %\qed
\end{enumerate} 
\end{proof} 

Now we show that our construction is equivalent to the construction of   G\'acs and Weiner \cite{gacs2003}. 
Recall that they constructed a family of KM-arcs as follows.
Let $I$ be a direct complement of $F'$ in the additive group of $F$.  
Let $H_0$ be a KM-arc of type $s \ge 1$ with affine part $\{ (x_k:y_k:1) \mid x_k,y_k \in F'\}$. Construct the following point set
\[
J:=\{ (x_k:y_k+i:1) \mid (x_k:y_k:1) \in H_0, i \in I\}.
\]
Then $J$ can be uniquely extended to a KM-arc of type $t=sq/r$ in $PG(2,q)$.

Now we consider a collineation $\gamma$ defined by
\[
\gamma((a:b:c)) = (a:c:a+b+cd),
\]
where $d \in F'$ is chosen such that the line $x+y+zd=0$ does not intersect $H_0$, so $x_k+y_k+d \ne 0$. 
Under $\gamma$, the point $(x_k: y_k:1)$ of $H_0$ is mapped to
$\left(\frac{x_k}{x_k+y_k+d}:\frac{1}{x_k+y_k+d}:1 \right)
%= \dfrac{x_k}{x_k+y_k+d}+\dfrac{1}{x_k+y_k+d}\mathbf{i} \in K.
$, which corresponds to the element 
\[
p_k= \frac{x_k}{x_k+y_k+d}+\frac{1}{x_k+y_k+d}\mathbf{i} \in K.
\]
Also, the point $(x_k:y_k+i:1)$ of $J$ is mapped to
%\[
%\left(\dfrac{x_k}{x_k+y_k+d+i}:\dfrac{1}{x_k+y_k+d+i}:1 \right) \in \dfrac{1}{1+I} \left( \dfrac{x_k}{x_k+y_k+d}+\dfrac{1}{x_k+y_k+d}\mathbf{i}  \right) \in K. 
%\]
\[
\frac{x_k}{x_k+y_k+d+i}+\frac{1}{x_k+y_k+d+i}\mathbf{i}= \frac{p_k}{1+i(x_k+y_k+d)^{-1}}\in  \frac{p_k}{1+I}. 
\]
Therefore, 
$$\gamma (J) =  \left\{ \frac{p_k}{1+I} \right\}.$$ 

On the other hand,  if we consider $F$ as a vector space over $F'$ then $I$ is a subspace of codimension 1,  
not containing $F'$. 
Hence, $1+I$ is a coset, such that $(1+I) \cap F' = \{ 1\}$. Further, we can choose $b\in K^*$ such that 
$bV_1$ is parallel to $I$ (note that $V_0:= \{ x \in F \mid Tr_{F/F'}(x)=0 \}$ is a subspace in $F$ of codimension 1, and 
$bV_0$, $b \in K^*$, runs the set of all subspaces of codimension 1 in $F$). 
Let $bV_1 \cap F' = a$. Then $a^{-1}bV_1 \cap F' =1$ and $cV_1 \cap F' =1$ for 
$c=a^{-1}b$. Hence $cV_1 = 1+I$.  It shows the equivalence of our construction to the construction of G\'acs and Weiner.

%%%%%%%%%%%%%%%%%%%%%%%%%%%%%%%%%%%%%%%%%%%%%

\section{A family of KM-arcs type $t=q/r$} 

We present now some other examples of KM-arcs. 

\subsection{The construction}
Let $r=2^h$, where $h \mid m$. Let $F'=\mathbb{F}_{r}$, and $K'=\mathbb{F}_{r^2}$. We denote the group of $(r+1)$-st roots of unity in $K'$ by $S'$.  Let $\mathbf{u}$ be a generator of $S'$ whose conjugate over $F'$ is $\tilde{\mathbf{u}}=\mathbf{u}^{r}$. Let $b=\mathbf{u}+ \tilde{\mathbf{u}}$ be the trace of $\mathbf{u}$ over $F'$  and $(b_n)$ be the sequence in $F'$ defined by the recurrence relation
	$
	b_{n+1}=bb_n+b_{n-1},
$
 with initial terms $b_0=1,b_1=0$. Explicitly, for each $n$, 
 \[
 b_n = \dfrac{\mathbf{u}^{n-1}+\tilde{\mathbf{u}}^{n-1}}{\mathbf{u}+\tilde{\mathbf{u}}}.
 \]
              
Let $U:= \{u_i \mid 0 \le i \le r \}$, where $u_i= b_i+b_{i+1}\mathbf{i}$, for each $i$. 
Alternatively, if we identify each point $z=x+y\mathbf{i}$ as $[x \text{ } y]^T$,  then elements of $U$ are obtained recursively as
\[
u_{i+1}= \begin{bmatrix} 0 & 1 \\ 1 & b \end{bmatrix}u_i,
\]
with the initial term $u_0=1=[1 \text{ } 0]^T$.  We  also recall the relative trace map
$$
Tr_{F/\mathbb{F}_r}(x)= x^{q/r}+x^{q/r^2}+ \cdots +x^r+x.
$$
Define $V_1:= \{ x \in F \mid Tr_{F/\mathbb{F}_r}(x)=1 \}$ and $V_c:=cV_1$ for $c \in K^*$. 
In this subsection we prove the following theorem.
	
\begin{theorem} \label{Uconstruction} Let $c \in K^*$. The  set $H_r:= \{\lambda u \mid 1/\lambda \in V_c,u \in U\}$ is a KM-arc of type $t=q/r$ with $t$-nucleus at $0$ in $K$. 
	\end{theorem}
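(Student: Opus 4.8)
The plan is to verify the two hypotheses of Theorem~\ref{Eset}: that $H_r$ is a star-set of type $t=q/r$, and that $\pi_e(H_r)=0$ for every $e\in E$. Since multiplication by $c$ is a collineation of $PG(2,q)$ fixing $0$, I would first reduce to the case $c=1$, so that $H_r=\{\lambda u\mid 1/\lambda\in V_1,\ u\in U\}$ with every $\lambda$ in $F^*$. To see that $H_r$ is a star-set I would compute the bilinear form on $U$: writing $u_i=b_i+b_{i+1}\mathbf i$ and using $\bar{\mathbf i}=\mathbf i+1$ together with the closed form $b_n=(\mathbf u^{n-1}+\tilde{\mathbf u}^{n-1})/(\mathbf u+\tilde{\mathbf u})$ and the relations $\mathbf u\tilde{\mathbf u}=1$, $\mathbf u+\tilde{\mathbf u}=b$, one obtains
\[
\langle u_i,u_j\rangle=b_ib_{j+1}+b_{i+1}b_j=b_{\,i-j+1}
\]
with indices read modulo $r+1$. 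As $b_n=0$ exactly when $n\equiv1\pmod{r+1}$, this is nonzero for $i\ne j$; hence $u_i/u_j\notin F$, so the $r+1$ lines $Fu_i$ through $0$ are pairwise distinct and each carries the $q/r$ distinct points $\{\lambda u_i\mid 1/\lambda\in V_1\}$ of $H_r$. Since $|V_1|=q/r$ and $|U|=r+1$, this accounts for $q+q/r$ distinct points, so $H_r$ is a star-set of type $q/r$.

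The conceptual reason the power-sum condition holds is that $U$ is an oval with $1$-nucleus at $0$: the same closed form gives the identity $b_i^2+b\,b_ib_{i+1}+b_{i+1}^2=1$, so $U$ lies on the conic $x^2+bxy+y^2+z^2=0$, which is nondegenerate (because $b\ne0$) with nucleus $(0:0:1)$; hence $U\cup\{0\}$ is a hyperoval of $PG(2,r)$. (When $m/h$ is odd one may take the quadratic generator inside $\mathbb F_{r^2}$, and the statement becomes, up to a collineation, case~1) of Theorem~\ref{construction4}; I would treat all cases uniformly instead.) For $e\in E=D'$ write $e=k+(q-1)i$ with $1\le k\le q-1$ and $i\preceq k$. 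Because $1/\lambda\in V_1\subseteq F^*$, one has $\sum_{1/\lambda\in V_1}\lambda^e=\sum_{\mu\in V_1}\mu^{-e}$, which equals $\pi_{q-1-k}(V_1)$ when $1\le k\le q-2$ and equals the even number $|V_1|$ when $k=q-1$; therefore
\[
\pi_e(H_r)=\Big(\sum_{u\in U}u^e\Big)\Big(\sum_{1/\lambda\in V_1}\lambda^e\Big)=\pi_{q-1-k}(V_1)\,\pi_e(U),
\]
which already vanishes when $k=q-1$. By Lemma~\ref{genTset}, $\pi_{q-1-k}(V_1)=0$ unless $k=\sum_{j=0}^{h-1}2^ja_j$ with every $a_j\in\mathcal A$, so the remaining task is to show $\pi_e(U)=0$ for such $k$.

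Here I would compute $\pi_e(U)$ directly. With $\tilde{\mathbf u}=\mathbf u^{-1}$ one has $u_i=P\mathbf u^i+Q\tilde{\mathbf u}^i$, where $P=(1+\mathbf u\mathbf i)/(b\mathbf u)$ and $Q=(1+\tilde{\mathbf u}\mathbf i)/(b\tilde{\mathbf u})$; expanding $u_i^e$ by the binomial theorem and summing over $i$ with $\sum_{i=0}^r\mathbf u^{ic}=[\,(r+1)\mid c\,]$ yields
\[
\pi_e(U)=\sum_{\substack{0\le l\le e\\ (r+1)\mid(2l-e)}}\binom{e}{l}P^lQ^{e-l}.
\]
Now each nonzero summand $2^ja_j$ of $k$ is a single power of $2$ lying in a position $\equiv j\pmod h$, so these positions are pairwise distinct modulo $h$, and multiplying a summand by $q=r^{m/h}$ moves its position by the multiple $m$ of $h$; hence the binary expansion of $e=(k-i)+qi$ has at most one nonzero digit in each residue class modulo $h$. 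Using $2^h\equiv-1\pmod{r+1}$ one then checks that, for any $l\preceq e$, the residue $2l-e\bmod(r+1)$ is a $\pm1$-combination of distinct powers $2^a$ with $0\le a<h$, so it has absolute value at most $r-1$ and is nonzero (as $e>0$); thus $(r+1)\nmid(2l-e)$ whenever $l\preceq e$. Consequently every index $l$ in the sum has $l\not\preceq e$, so $\binom{e}{l}$ is even and $\pi_e(U)=0$. Then Theorem~\ref{Eset} gives that $H_r$ is a KM-arc of type $q/r$ with $t$-nucleus at $0$.

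The main obstacle is exactly this last vanishing $\pi_e(U)=0$. When $m/h$ is even the elements of $U$ need not lie in $\mathbb F_{r^2}$, so one cannot reduce the exponent $e$ modulo $r^2-1$ and simply quote Lemma~\ref{Usetd}; the arithmetic with $r+1$ and the digit bookkeeping above is what has to replace that shortcut, and getting the ``distinct digits modulo $h$'' claim and the non-divisibility $(r+1)\nmid(2l-e)$ exactly right is the delicate point.
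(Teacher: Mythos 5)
Your proof is correct, and while it follows the same skeleton as the paper's argument (reduce to $c=1$, verify the star-set property, factor $\pi_e(H_r)=\pi_{q-1-k}(V_1)\,\pi_e(U)$, then combine Lemma \ref{genTset} with Theorem \ref{Eset}), it replaces the paper's key Lemma \ref{genUset} by a genuinely different treatment of the crucial vanishing $\pi_e(U)=0$. The paper proves $\sum_{u\in U}\langle v,u\rangle^k=0$ by expanding $\langle v,u_i\rangle=b_iA+b_{i+1}B$, reducing to $\sum_i b_i^xb_{i+1}^{k-x}=0$ via roots of unity, and then must convert back from bracket sums to the power sums $\pi_d(U)$ by re-running the polynomial-coefficient argument of Theorem \ref{Eset}. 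You instead compute $\pi_e(U)$ directly: writing $u_i=P\mathbf{u}^i+Q\tilde{\mathbf{u}}^i$, filtering with $\sum_{i=0}^r\mathbf{u}^{ic}$, and killing the surviving binomial coefficients by Lucas' theorem, using the observation that the binary digits of $e=(k-i)+qi$ lie in distinct residue classes modulo $h$, so that $2l-e\not\equiv 0\pmod{r+1}$ whenever $l\preceq e$. The arithmetic core is the same as in the paper's Lemma \ref{genUset} --- a nonzero signed sum of distinct powers of $2$ below $2^h$ cannot be divisible by $r+1$; there it appears as the statement $x'+y'\not\equiv 0\pmod{r+1}$ --- but your route avoids the bilinear-form detour and the conversion step, and it also treats the edge case $k=q-1$ (where $\pi_0(V_1)=|V_1|=q/r=0$) explicitly, which the paper leaves implicit. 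Your closed form $\langle u_i,u_j\rangle=b_{i-j+1}$ for the star-set verification is a clean alternative to the paper's cyclic-action argument, and your remark that $U\cup\{0\}$ is a conic together with its nucleus in the subplane over $F'$ correctly identifies the geometric content of Lemma \ref{Usetcoord}. What the paper's approach buys is reusability of Lemma \ref{genUset} in the bracket formalism of Theorem \ref{thmbracket}; what yours buys is a shorter, self-contained verification that works uniformly without passing through $\langle\cdot,\cdot\rangle$.
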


The proof of Theorem \ref{Uconstruction} is presented at the end of the subsection. 

\begin{lemma} \label{seqperiod}  
The set $U$ has size $r+1$. 
\end{lemma}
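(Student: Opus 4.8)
The plan is to show that the $r+1$ elements $u_0,\dots,u_r$ are pairwise distinct, which combined with the defining list $U=\{u_i\mid 0\le i\le r\}$ gives $|U|=r+1$. The natural device is the closed form $b_n=(\mathbf{u}^{n-1}+\tilde{\mathbf{u}}^{n-1})/(\mathbf{u}+\tilde{\mathbf{u}})$ already recorded in the excerpt; from it one reads off $u_i=b_i+b_{i+1}\mathbf{i}$, and since $z=x+y\mathbf{i}$ is uniquely determined by the pair $(x,y)\in F'^2$ (because $K=F(\mathbf{i})$, $K'=F'(\mathbf{i})$), the equality $u_i=u_j$ is equivalent to the simultaneous equalities $b_i=b_j$ and $b_{i+1}=b_{j+1}$.

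First I would translate $b_i=b_j$ into a statement about $\mathbf{u}$. Writing $c_n:=\mathbf{u}^{n}+\tilde{\mathbf{u}}^{n}=\mathbf{u}^n+\mathbf{u}^{-n}$ (using $\tilde{\mathbf{u}}=\mathbf{u}^r=\mathbf{u}^{-1}$ since $\mathbf{u}\in S'$), we have $(\mathbf{u}+\tilde{\mathbf{u}})b_n=c_{n-1}$, and $b+0\cdot\mathbf{i}$ nonzero means $c_{n-1}$ determines $b_n$. So $b_i=b_j$ and $b_{i+1}=b_{j+1}$ become $c_{i-1}=c_{j-1}$ and $c_i=c_j$, i.e. $\mathbf{u}^{i-1}+\mathbf{u}^{-(i-1)}=\mathbf{u}^{j-1}+\mathbf{u}^{-(j-1)}$ and $\mathbf{u}^i+\mathbf{u}^{-i}=\mathbf{u}^j+\mathbf{u}^{-j}$. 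The identity $x+x^{-1}=y+y^{-1}$ in a field forces $x=y$ or $x=y^{-1}$; applying this to both equations, the relevant case to rule out is $\mathbf{u}^{i}=\mathbf{u}^{-j}$ together with $\mathbf{u}^{i-1}=\mathbf{u}^{-(j-1)}$ (the "same sign" cases immediately give $\mathbf{u}^{i-j}=1$). Dividing these two relations yields $\mathbf{u}=\mathbf{u}^{-1}$, i.e. $\mathbf{u}^2=1$; but $\mathbf{u}$ generates $S'$, a group of order $r+1>2$ (as $r\ge 4$ in the KM-arc range, and even $r=2$ gives order $3$), so $\mathbf{u}^2\ne1$. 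Hence in all cases $\mathbf{u}^{i-j}=1$, forcing $(r+1)\mid(i-j)$, so $i=j$ for $0\le i,j\le r$.

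The main obstacle, such as it is, is bookkeeping: one must handle the two sign-ambiguities coming from $x+x^{-1}=y+y^{-1}$ carefully and check that the only surviving case ("opposite signs" in both equations) collapses to $\mathbf{u}^2=1$, which is excluded because $|S'|=r+1\ge3$. A minor point to state cleanly is that $b=\mathbf{u}+\tilde{\mathbf{u}}\ne0$: indeed $b=0$ would mean $\mathbf{u}=\tilde{\mathbf{u}}=\mathbf{u}^r$, again giving $\mathbf{u}^{r-1}=1$ and hence $\mathbf{u}\in F'\cap S'=\{1\}$, contradicting that $\mathbf{u}$ generates $S'$. With $b\ne0$ the division by $\mathbf{u}+\tilde{\mathbf{u}}$ in the closed form is legitimate, and the argument above goes through, giving $|U|=r+1$.
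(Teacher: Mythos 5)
Your proof is correct. It takes a slightly different route from the paper's: the paper shows that the sequence $(b_n)$ has exact period $r+1$ (the period divides $r+1$ because $b_{r+1}=1=b_0$ and $b_{r+2}=0=b_1$, and it cannot be smaller because $b_n=0$ forces $(r+1)\mid(n-1)$), and then deduces that the $r+1$ consecutive pairs $(b_i,b_{i+1})$ are pairwise distinct; you instead prove the distinctness of these pairs directly from the closed form, reducing $b_i=b_j$ to $\mathbf{u}^{i-1}+\mathbf{u}^{-(i-1)}=\mathbf{u}^{j-1}+\mathbf{u}^{-(j-1)}$ and using the factorization $x+x^{-1}=y+y^{-1}\iff (x+y)(xy+1)=0$ together with the fact that $\mathbf{u}$ has odd order $r+1\ge 3$. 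Both arguments rest on the same closed form and on the order of $\mathbf{u}$; yours trades the period/recurrence observation for a short case analysis on the sign ambiguities, and it has the minor virtue of making explicit the check $b=\mathbf{u}+\tilde{\mathbf{u}}\ne 0$, which the paper's division by $\mathbf{u}+\tilde{\mathbf{u}}$ uses implicitly. Your case analysis is complete (any occurrence of the ``same sign'' alternative gives $\mathbf{u}^{i-j}=1$ at once, and the doubly ``opposite sign'' case collapses to $\mathbf{u}^2=1$, which is impossible), so the argument goes through.
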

	\begin{proof} It is sufficient to prove that the sequence $(b_n)$ has period $r+1$. We have
\[
b_{r+1} = \dfrac{\mathbf{u}^{r}+\tilde{\mathbf{u}}^{r}}{\mathbf{u}+\tilde{\mathbf{u}}}=1, 
\]
\[
b_{r+2}= \dfrac{\mathbf{u}^{r+1}+\tilde{\mathbf{u}}^{r+1}}{\mathbf{u}+\tilde{\mathbf{u}}}=0, 
\]
%\[
%b_{r+1}=bb_{r}+b_{r-1}=1, 
%\]
so that the period of $(b_n)$ divides $r+1$. On the other hand,
$b_n=0$ if and only if $\mathbf{u}^{n-1}=\tilde{\mathbf{u}}^{n-1}$ if and only if $\mathbf{u}^{2(n-1)}=1$. This occurs if and only if $r+1$ divides $n-1$, which implies $b_{r+2}$ is the smallest zero term of the sequence excluding $b_1$. Therefore  $(b_n)$ has period $r+1$ and consequently $|U|=r+1$.  %\qed
	\end{proof}

Let $U':= \{u_i \mid 0 \le i \le r \}$, where $u_i= b_i+b_{i+1}\mathbf{u}$, for each $i$. It can be checked that
elements of $U'$ are pairwise distinct and of order $r+1$, so that $U'=S'$. This implies the following.
\begin{lemma} \label{Usetcoord} The set $U$ can be written as 
	\[
	U= \{x+y\mathbf{i} \mid x,y \in F', x^2+y^2+bxy=1 \}. 
	\]
\end{lemma}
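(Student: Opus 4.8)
The statement claims that $U = \{x+y\mathbf{i} \mid x,y \in F',\ x^2+y^2+bxy=1\}$, where $U=\{b_i+b_{i+1}\mathbf{i} \mid 0\le i\le r\}$. The strategy is to transport the problem from $K=F(\mathbf{i})$ to $K'=F'(\mathbf{u})$ via the $F'$-linear map sending $\mathbf{i}\mapsto\mathbf{u}$, exactly as the excerpt suggests by introducing $U'=\{b_i+b_{i+1}\mathbf{u} \mid 0\le i\le r\}$. First I would verify the claim, stated in the excerpt without detail, that $U'=S'$. Using the closed form $b_n=(\mathbf{u}^{n-1}+\tilde{\mathbf{u}}^{n-1})/(\mathbf{u}+\tilde{\mathbf{u}})$ together with $\tilde{\mathbf{u}}=\mathbf{u}^r=\mathbf{u}^{-1}$ (since $\mathbf{u}\in S'$ has norm $1$ over $F'$), one computes
\[
b_i+b_{i+1}\mathbf{u} = \frac{\mathbf{u}^{i-1}+\mathbf{u}^{1-i}}{\mathbf{u}+\mathbf{u}^{-1}} + \frac{\mathbf{u}^{i}+\mathbf{u}^{-i}}{\mathbf{u}+\mathbf{u}^{-1}}\,\mathbf{u}
= \frac{\mathbf{u}^{i-1}+\mathbf{u}^{1-i}+\mathbf{u}^{i+1}+\mathbf{u}^{1-i}}{\mathbf{u}+\mathbf{u}^{-1}}
= \mathbf{u}^{i},
\]
using characteristic $2$ to cancel the two copies of $\mathbf{u}^{1-i}$. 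Hence $u_i = \mathbf{u}^i$ in the $K'$-picture, so $U'=\{\mathbf{u}^0,\dots,\mathbf{u}^r\}$, and by Lemma \ref{seqperiod} these are $r+1$ distinct elements, which together with $\mathbf{u}\in S'$ of order $r+1$ gives $U'=S'$.

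Next I would identify $S'$ with the stated conic. Since $S'=\{w\in K'\mid w\tilde w=1\}$ and every $w\in K'$ is uniquely $w=x+y\mathbf{u}$ with $x,y\in F'$, I compute $\tilde w = x+y\tilde{\mathbf{u}}$ and
\[
N(w)=w\tilde w = x^2 + xy(\mathbf{u}+\tilde{\mathbf{u}}) + y^2\mathbf{u}\tilde{\mathbf{u}} = x^2+bxy+y^2,
\]
because $\mathbf{u}+\tilde{\mathbf{u}}=b$ and $\mathbf{u}\tilde{\mathbf{u}}=N(\mathbf{u})=1$. Therefore $S'=\{x+y\mathbf{u}\mid x,y\in F',\ x^2+bxy+y^2=1\}$. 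Combining, $u_i=b_i+b_{i+1}\mathbf{u}=\mathbf{u}^i\in S'$ means precisely that the pair $(b_i,b_{i+1})$ satisfies $b_i^2+b\,b_i b_{i+1}+b_{i+1}^2=1$ for each $i$.

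Finally, I would translate this back to $U\subset K$. The coordinates $(b_i,b_{i+1})\in F'\times F'$ are the same whether one writes $b_i+b_{i+1}\mathbf{i}$ or $b_i+b_{i+1}\mathbf{u}$; the defining relation $x^2+bxy+y^2=1$ is a relation among elements of $F'$ that does not involve $\mathbf{i}$ or $\mathbf{u}$. Thus $U\subseteq\{x+y\mathbf{i}\mid x,y\in F',\ x^2+y^2+bxy=1\}$. For the reverse inclusion, given $(x,y)\in F'\times F'$ with $x^2+bxy+y^2=1$, the element $x+y\mathbf{u}$ lies in $S'=U'$, so $(x,y)=(b_i,b_{i+1})$ for some $i$, whence $x+y\mathbf{i}=u_i\in U$. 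Counting also confirms consistency: both sides have $r+1$ elements by Lemma \ref{seqperiod}. The only mildly delicate point — the "main obstacle," though it is light — is being careful that the map $\mathbf{i}\mapsto\mathbf{u}$ is only $F'$-linear, not $F$-linear or multiplicative, so one must phrase the argument purely in terms of the $F'$-coordinate pairs and the polynomial identity they satisfy, rather than claiming any field isomorphism between $K$ and $K'$.
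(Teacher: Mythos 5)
Your proof is correct and takes essentially the same route as the paper, which merely asserts that $U'=S'$ and leaves the verification to the reader; your explicit computation $b_i+b_{i+1}\mathbf{u}=\mathbf{u}^i$ and the norm identity $N_{K'/F'}(x+y\mathbf{u})=x^2+bxy+y^2$ supply exactly the missing details, and the transfer back to $\mathbf{i}$-coordinates via the shared pairs $(b_i,b_{i+1})\in F'\times F'$ is handled correctly.
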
 
 We also recall from Section 4 the set
 \[
 \mathcal{A}= \left\{ \dfrac{q}{r^j} \mid 1 \le j \le m/h \right\} \cup \{0\}. 
 \]
 \begin{lemma} \label{genUset} Let $k=\sum_{j=0}^{h-1}2^ja_j \in \{1, \cdots,q-2\},$ where $a_j \in \mathcal{A}$.  Then for each $v \in S$,
 	\[
 	\sum_{u \in U} \langle v,u \rangle^k = 0.
 	\]
 \end{lemma}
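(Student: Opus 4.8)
The plan is to evaluate $\sum_{u\in U}\langle v,u\rangle^{k}$ in closed form and then reduce the statement to a short arithmetic fact about the binary expansion of $k$. First I would obtain an explicit expression for $\langle v,u_i\rangle$. Writing $v=v_1+v_2\mathbf{i}$ with $v_1,v_2\in F$, the $F$‑bilinearity of $\langle\cdot,\cdot\rangle$ together with $\langle 1,1\rangle=\langle\mathbf{i},\mathbf{i}\rangle=0$ and $\langle 1,\mathbf{i}\rangle=T(\bar{\mathbf{i}})=T(\mathbf{i})=1$ gives $\langle x_1+y_1\mathbf{i},\,x_2+y_2\mathbf{i}\rangle=x_1y_2+x_2y_1$. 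Using $u_i=b_i+b_{i+1}\mathbf{i}$, the identity $b_n=(\mathbf{u}^{n-1}+\tilde{\mathbf{u}}^{n-1})/b$, and $\mathbf{u}^{-1}=\tilde{\mathbf{u}}$ (since $\mathbf{u}^{r+1}=1$), this yields
\[
\langle v,u_i\rangle=v_1b_{i+1}+v_2b_i=\alpha\,\mathbf{u}^{i}+\beta\,\tilde{\mathbf{u}}^{i},\qquad
\alpha=\frac{v_1+v_2\tilde{\mathbf{u}}}{b},\quad \beta=\frac{v_1+v_2\mathbf{u}}{b},
\]
with $\alpha,\beta$ in the compositum $F(\mathbf{u})$.

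Next I would raise this to the $k$‑th power and sum over $i=0,\dots,r$. Expanding by the binomial theorem, using $\tilde{\mathbf{u}}=\mathbf{u}^{r}$ and $\mathbf{u}^{r+1}=1$ (so that $j+r(k-j)\equiv 2j-k\pmod{r+1}$), Lucas' theorem (so that $\binom{k}{j}$ is odd exactly when $j\preceq k$), and the identity $\sum_{i=0}^{r}\zeta^{i}=r+1\equiv 1$ if $\zeta=1$ and $\sum_{i=0}^{r}\zeta^{i}=0$ if $\zeta\neq1$ (applied with $\zeta=\mathbf{u}^{\,2j-k}$; note $r+1$ is odd), one gets
\[
\sum_{u\in U}\langle v,u\rangle^{k}=\sum_{i=0}^{r}\big(\alpha\mathbf{u}^{i}+\beta\tilde{\mathbf{u}}^{i}\big)^{k}
=\sum_{\substack{0\le j\le k,\ j\preceq k\\ (r+1)\,\mid\,(2j-k)}}\alpha^{j}\beta^{k-j}.
\]
So it is enough to show that no integer $j$ with $0\le j\le k$ satisfies simultaneously $j\preceq k$ and $2j\equiv k\pmod{r+1}$.

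This last point is the heart of the proof, and it is where the hypothesis on $k$ enters. Every nonzero element of $\mathcal{A}$ is a power of $2$, so write $a_{j'}=2^{l_{j'}h}$ whenever $a_{j'}\neq 0$, set $p_{j'}=j'+l_{j'}h$, and let $J_0=\{j':a_{j'}\neq 0\}\neq\varnothing$. Because $0\le j'\le h-1$, the exponents $p_{j'}$ are pairwise distinct, so $k=\sum_{j'\in J_0}2^{p_{j'}}$ has binary support exactly $\{p_{j'}:j'\in J_0\}$; hence $j\preceq k$ forces $j=\sum_{j'\in B}2^{p_{j'}}$ for some $B\subseteq J_0$, and a direct computation telescopes:
\[
k-2j=\sum_{j'\in J_0}2^{p_{j'}}-\sum_{j'\in B}2^{p_{j'}+1}=\sum_{j'\in J_0\setminus B}2^{p_{j'}}-\sum_{j'\in B}2^{p_{j'}}.
\]
Reducing modulo $r+1=2^{h}+1$ and using $2^{p_{j'}}=2^{j'}(2^{h})^{l_{j'}}\equiv(-1)^{l_{j'}}2^{j'}$, we obtain $k-2j\equiv\Delta\pmod{r+1}$ with $\Delta=\sum_{j'\in J_0}\gamma_{j'}2^{j'}$ and each $\gamma_{j'}\in\{1,-1\}$. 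The powers $2^{j'}$ ($j'\in J_0\subseteq\{0,\dots,h-1\}$) are distinct, and the lowest term ($j'=\min J_0$) cannot be cancelled, so $0<|\Delta|\le 2^{h}-1<r+1$; thus $r+1\nmid\Delta$, contradicting $2j\equiv k\pmod{r+1}$. Hence the index set above is empty and the sum equals $0$.

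The computations with the bilinear form — the closed form for $\langle v,u_i\rangle$ and the reduction of the double sum — are routine; I expect the only real content to be the arithmetic in the last step, and in particular the observation that the hypothesis on $k$ pins down its binary expansion to the set $\{p_{j'}\}$, which is exactly what makes both the telescoping identity for $k-2j$ and the bound $|\Delta|<r+1$ available. Note that the argument never uses the parity of $m/h$, so it is uniform across all admissible $h\mid m$.
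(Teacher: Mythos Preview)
Your argument is correct. The final arithmetic step is exactly the heart of the matter, and your reduction to the single condition ``no $j\preceq k$ with $2j\equiv k\pmod{r+1}$'' is clean; the observation that the $p_{j'}=j'+l_{j'}h$ are pairwise distinct (since they are distinct modulo $h$) is precisely what makes the binary expansion of $k$ explicit and the bound $|\Delta|\le r-1$ available.

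The paper's proof lands on the same arithmetic fact but reaches it by a different, somewhat heavier, bookkeeping. Instead of first collapsing $\langle v,u_i\rangle$ to $\alpha\mathbf{u}^{i}+\beta\tilde{\mathbf{u}}^{i}$ and doing a single binomial expansion indexed by $j\preceq k$, the paper keeps the expression in the form $b_iA+b_{i+1}B$ with $A=\langle v,1\rangle$, $B=\langle v,\mathbf{i}\rangle$, expands along the ``block'' structure of $k=\sum_{j'}2^{j'}a_{j'}$ to get a sum over a set $\mathcal{B}=\{\sum_{j'}2^{j'}x_{j'}:x_{j'}\in\{0,a_{j'}\}\}$, and then expands each factor $b_i^{x}$, $b_{i+1}^{k-x}$ via the closed form for $b_n$, producing a double sum over auxiliary sign-sets $\mathcal{B}_x,\mathcal{B}_y$. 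The divisibility obstruction there is that $x'+y'=\sum_{j'}\pm2^{j'}a_{j'}\not\equiv 0\pmod{r+1}$, which is the same inequality you prove (your $\Delta$ is exactly this sum after reducing $a_{j'}$ modulo $r+1$). So the core arithmetic coincides; what you gain is a single-index expansion and no auxiliary sets, at the cost of momentarily leaving $F$ for the compositum $F(\mathbf{u})\subseteq K$. The paper's version has the minor advantage that all intermediate quantities stay in $F$ (or $F'$), but your route is shorter and makes the role of Lucas' theorem and the binary support of $k$ more transparent. Your closing remark that the parity of $m/h$ is irrelevant is correct and consistent with the paper, which likewise imposes no parity hypothesis in this lemma.
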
 
  	
\begin{proof}  \begin{enumerate}
 			\item    Let
 			\[
 			\mathcal{B} :=\left\{\sum_{j=0}^{h-1}2^jx_j \mid x_j \in \{0,a_j\} \right\}.
 			\]
 			We claim that for each $x \in \mathcal{B}$, 
 			\[
 			\sum_{i=0}^{r} b_i^xb_{i+1}^{k-x}=0.
 			\]  
 			Fix $x \in \mathcal{B}$. Let
 			\[y:=k-x=\sum_{j=0}^{h-1}2^jy_j,
 			\]
 			where $y_j:=a_j-x_j$ for each  $0 \le j \le h-1$.
 			We also introduce the sets
 			\[
 			\mathcal{B}_x := \left\{  \sum_{j=0}^{h-1} \pm 2^j x_j   \right\}, \text{ and } \mathcal{B}_y := \left\{  \sum_{j=0}^{h-1} \pm 2^j y_j   \right\}.
 			\]
 			For each $0\le i \le r$, we have
 			\begin{align*}
 			b_i^x &= 		\dfrac{(\mathbf{u}^{i-1}+\tilde{\mathbf{u}}^{i-1})^x}{(\mathbf{u}+\tilde{\mathbf{u}})^{x}} = 
 			\dfrac{\prod\limits_{j=0}^{h-1} (\mathbf{u}^{(i-1)2^jx_j}+\tilde{\mathbf{u}}^{(i-1)2^jx_j})}{(\mathbf{u}+\tilde{\mathbf{u}})^{x}}
 			=\dfrac{\sum\limits_{x' \in \mathcal{B}_x}\mathbf{u}^{(i-1)x'}}{(\mathbf{u}+\tilde{\mathbf{u}})^{x}}.
 			\end{align*}
 			Similarly, 
 			\[
 			b_{i+1}^{y}
 			= \dfrac{(\mathbf{u}^{i}+\tilde{\mathbf{u}}^{i})^{y}}{(\mathbf{u}+\tilde{\mathbf{u}})^{y}}
 			= \dfrac{\prod\limits_{j=0}^{h-1}(\mathbf{u}^{i2^jy_j}+\tilde{\mathbf{u}}^{i2^jy_j})}{(\mathbf{u}+\tilde{\mathbf{u}})^{y}}
 			=\dfrac{\sum\limits_{y' \in \mathcal{B}_y}\mathbf{u}^{iy'}}{(\mathbf{u}+\tilde{\mathbf{u}})^{y}}.
 			\]
 			Then	
 			\begin{align*}
 			\sum_{i=0}^{r}	b_i^xb_{i+1}^y =0 \iff
 			&\sum_{i=0}^{r} \left(\sum_{x' \in \mathcal{B}_x}\mathbf{u}^{(i-1)x'}\right)	\left(\sum_{y' \in \mathcal{B}_y}\mathbf{u}^{iy'}\right) =0\\
 			\iff	&\sum_{i=0}^{r} \sum_{\substack{x' \in \mathcal{B}_x \\y' \in \mathcal{B}_y}}\mathbf{u}^{i(x'+y')-x'} 
 			=\sum_{\substack{x' \in \mathcal{B}_x \\y' \in \mathcal{B}_y}}\sum_{i=0}^{r} \mathbf{u}^{i(x'+y')-x'}=0.
 			\end{align*}

 			%
 			%
 			%
 			%\begin{enumerate}
 			%		\item For each $0 \le j \le h-1$,  we have $a_j \pmod{r+1} \in \{-1,0,1\}$. 
 			%	This implies  $x+y =k  \ne 0 \pmod{r+1}$. It follows that,
 			%	for $1 \le i,i' \le r+1$,
 			%	\[
 			%	i(x+y)+y = i'(x+y)+y  \pmod{r+1}
 			%	\]
 			%	if and only if $i=i'$. Hence
 			%	$\{\mathbf{u}^{i(x+y)+y} \mid 1 \le i \le r+1\}$ and $\{\tilde{\mathbf{u}}^{i(x+y)+y} \mid 1 \le i \le r+1\}$ are equal to $S'$. 
 			%	
 			%	\item We have
 			%	\begin{align*}
 			%	2x-k &=\sum_{j=0}^{h-1}2^j(2x_{j}-a_j).
 			%	\end{align*} 
 			%	Since $x_j \in \{0,a_j\}$, it follows that $2x_j-a_j \in \{a_j,-a_j\}$. 
 			%	This implies 
 			%	\[x-y = 2x-k \ne 0 \pmod{r+1}.\]
 			%	Hence,
 			%	for $1 \le i,i' \le r+1$,
 			%	\[
 			%	i(x-y)+y = i'(x-y)+y  \pmod{r+1}
 			%	\]
 			%	if and only if $i=i'$. Consequently, the two sets
 			%	$\{\mathbf{u}^{i(x-y)-y} \mid 1 \le i \le r+1\}$ and $\{\tilde{\mathbf{u}}^{i(x-y)-y} \mid 1 \le i \le r+1\}$ are also equal to $S'$. 
 			%
 			%
 			%\end{enumerate}
 			%	
 			%	Hence,
 			%for $1 \le i,i' \le r+1$,
 			%\[
 			%i(x'+y')+y' = i'(x'+y')+y'  \pmod{r+1}
 			%\]
 			%if and only if $i=i'$.
 			We now consider the sum
 			\[
 			x'+y' =  \sum_{j=0}^{h-1}  2^j \left(\pm x_j\pm y_j \right).
 			\]
 			For each $j$, we have $x_j+y_j=a_j$ and 
 			$
 			x_j-y_j= 2x_j-a_j \in \{a_j,-a_j\}, 
 			$
 			so that
 			\[\pm x_j\pm y_j \in \{a_j,-a_j\}.\] 
 			We note that  $a_j \pmod{r+1} \in \{-1,0,1\}$. Also, $x'+y'=0$ implies $a_j=0$ for any $j$, which is impossible. 
 			It follows that  $x'+y' \ne 0 \pmod{r+1}. $ 
Therefore $\mathbf{u}^{x'+y'}$ generates a subgroup of $S'$ of order $\frac{r+1}{\gcd(x'+y',r+1)} \ne 1$. Consequently, 
 			\[
 			\sum_{i=0}^{r} \mathbf{u}^{ i(x'+y')-x'}=\mathbf{u}^{-x'}\sum_{i=0}^{r} \mathbf{u}^{ i(x'+y')}=0.
 			\] 
 			This proves $ \sum\limits_{i=0}^{r}	b_i^xb_{i+1}^{k-x}=0$. 
 			
 			\item To simplify the notation, let $A=\langle v,1 \rangle, B=\langle v, \mathbf{i} \rangle$. For fixed $0 \le i \le r$, we have
 			\begin{align*}
 			\langle v,u_i \rangle ^k &=\langle v, b_i+b_{i+1}\mathbf{i} \rangle ^k = \left(b_iA+b_{i+1}B \right)^{\sum_j2^ja_j} \\
 			&= \prod_{j=0}^{h-1} (b_i^{2^ja_j}A^{2^ja_j}+b_{i+1}^{2^ja_j}B^{2^ja_j}) = \sum_{x \in \mathcal{B}} b_i^xb_{i+1}^{k-x}A^xB^{k-x}.
 			\end{align*} 
 			From part 1),
 			\[	
 			\sum_{u \in U}	\langle v,u \rangle ^k = \sum_{i=0}^{r} \sum_{x \in \mathcal{B}} b_i^xb_{i+1}^{k-x}A^xB^{k-x} = \sum_{x \in \mathcal{B}} \left(\sum_{i=0}^{r} b_i^xb_{i+1}^{k-x}\right)A^xB^{k-x} =0.  %\qed
 			\] 
 		\end{enumerate}
 	\end{proof}

		\begin{proof}[Proof of Theorem \ref{Uconstruction}]				
%			 Let $x=b_i+b_{i+1}\mathbf{i},y=b_j+b_{j+1}\mathbf{i} \in U$, with $0 \le i,j\le r-1$, $i \ne j$.   
%			Suppose for a contradiction that the lines $xF$ and $yF$ are the same, that is, there exist $\lambda_x, \lambda_y \in F, u \in S$ such that
%			$x=\lambda_x u$ and $y=\lambda_y u$. This implies $x/\lambda_x=y/\lambda_y$, or equivalently,
%			\[
%			\dfrac{b_i}{\lambda_x}+\dfrac{b_{i+1}}{\lambda_x}\mathbf{i}=\dfrac{b_j}{\lambda_y}+\dfrac{b_{j+1}}{\lambda_y}\mathbf{i},
%			\]
%			which occurs if  and only if
%			\[
%			\dfrac{b_i}{b_{j}} = \dfrac{b_{i+1}}{b_{j+1}} =\dfrac{\lambda_x}{\lambda_y}. 
%			\]
%			On the other hand, 
%			\begin{align*}
%			\dfrac{b_i}{b_{i+1}} = \dfrac{b_j}{b_{j+1}} &\iff  \dfrac{\mathbf{u}^{i-1}+\mathbf{u}^{i-1}}{\mathbf{u}^{i}+\mathbf{u}^{i}}=\dfrac{\mathbf{u}^{j-1}+\mathbf{u}^{j-1}}{\mathbf{u}^{j}+\mathbf{u}^{j}} \\
%			&\iff \mathbf{u}^{j-i-1}(\mathbf{u}^2+1) = \mathbf{u}^{i-j-1}(\mathbf{u}^2+1), 
%			\end{align*}
%			which occurs if and only if $i=j$, a contradiction. This implies for $ x,y \in U$,   the lines $xF$ and $yF$ intersect at $0$ only.  Hence, $H$ is a star-set.  
			 We will assume that $c=1$ since multiplication by $c \in K^*$ is a collineation of $PG(2,q)$. 
			 	\begin{enumerate}
				\item  Let $x=b_i+b_{i+1}\mathbf{i},y=b_j+b_{j+1}\mathbf{i} \in U$, with $0 \le i,j\le r-1$, $i \ne j$.  
				Suppose that the lines $xF$ and $yF$ are the same. Since the matrix $ \begin{bmatrix}
				0 &1 \\ 1 & b
				\end{bmatrix}$
				permutes elements of $U$ cyclically, we may assume that $x=1$. Then $b_{j+1}=0$ and $y=1$, a contradiction. This implies for $x,y \in U$, the lines $xF$ and $yF$ intersect at $0$ only. Hence, $H_r$ is a star-set. 
					\item 	Let $d =k+(q-1)i \in D'$. We have
				\[
				\pi_d(H)=\pi_d(U)\sum_{1/\lambda \in V_1}\lambda^d= \pi_{q-1-k}(V_1)\pi_d(U). 
				\]
				From Lemma \ref{genTset},  if $\pi_{q-1-k}(V_1)\ne 0$, then
				% 		$q-1-k$ has the form
				\[
				k= \sum_{i=0}^{h-1} 2^ia_i,
				\]
				where $a_i \in \mathcal{A}$ for each $0 \le i \le h-1$. 
				From Lemma \ref{genUset}
				and by the method used in the proof of Theorem \ref{Eset}, we have $\pi_d(U)=0$.
				This implies $\pi_d(H_r)=0$, and by Theorem \ref{Eset}, $H_r$ is a KM-arc of type $t=q/r$ with $t$-nucleus at $0$.  %\qed
%				
%				\item  For each $1 \le k \le q-2$ and for each $ v \in S $, we have
%				\[
%				\sum_{y \in H} \langle v,y\rangle^k = 
%				\sum_{1/\lambda \in V_1} \lambda^{k}	\sum_{u \in U} \langle v,u\rangle^k= \pi_{q-1-k}(V_1)\sum_{u \in U} \langle v,u\rangle^k.
%				\] From Lemma \ref{genTset},  if $\pi_{q-1-k}(V_1)\ne 0$, then
%				% 		$q-1-k$ has the form
%					\[
%				k= \sum_{i=0}^{h-1} 2^ia_i,
%				\]
%				where $a_i \in \mathcal{A}$ for each $0 \le i \le h-1$. 
%				By Lemma \ref{genUset},  $\sum_{u \in U} \langle v,u\rangle^k=0$. This implies $\sum_{y \in H} \langle v,y\rangle^k = 0$.  By Theorem \ref{thmbracket}, $H$ is a KM-arc of type $t$.  %\qedhere
			\end{enumerate} 
			\end{proof}

\subsection{Automorphisms of $H_r$}
In this section, we  discuss some properties of the automorphism group of KM-arcs $H_r$ and show that they are  translation KM-arcs.  
%constructed in previous sections. 
%In particular, we will show that they are  translation KM-arcs.  
%In some cases, they are translation KM-arcs. 

An \textit{elation} with axis the line $l$ and centre the point $R$
on $l$ is a collineation which fixes the points of $l$ and stabilises the lines through the centre $R$. The set of all elations with a fixed centre and a fixed axis form a subgroup of $P\Gamma L(3,q)$. 

Let $H$ be a KM-arc of type $t>2$ with $t$-nucleus at $N$. Then $H$ is an \textit{elation KM-arc} with elation line $l_0$ if  for every $t$-secant $l \ne l_0$ to $H$, the group of elations with axis $l_0$ that stabilise $H$ acts transitively on the points of $l \cap H$.  
Furthermore, $H$ is a \textit{translation} KM-arc with translation line $l_0$ if the group of all elations with axis $l_0$ that stabilise $H$ acts transitively on the points of $H \backslash l_0$. In particular, every translation KM-arc is an elation KM-arc.

For $x,y \in F$, let $z=x+y\mathbf{i} \in K$.  
Let $l_0$ be the line $\langle 1,z \rangle =0$.  
We consider several maps from the automorphism group $P \Gamma L(3,q)$ of $PG(2,q)$. 
\begin{enumerate}

\item We define $\theta$ as a map induced by the matrix
		\[ 
	\begin{bmatrix}
	0 & 1 & 0  \\
	1 & b & 0  \\
	0 & 0 & 1 
	\end{bmatrix},
	\] 
which generates the cyclic group $C_{r+1}$.

\item 	The map $\sigma'$ is defined by
	\[
	\sigma':= \begin{bmatrix}
	1 & \delta+1 \\
	0 & b
	\end{bmatrix} \sigma,
	\]
	where $\sigma: z \mapsto z^2$ is a generator of $Gal(K/\mathbb{F}_2)$.  Explicitly, 
	\[
	\sigma'(z)=\sigma'(x+y\mathbf{i}) = x^2 + y^2+by^2 \mathbf{i}.
	\]
Note that $(\sigma')^m \in GL(2,q)$.  
%\end{enumerate}  

\item Let $E_{a,b}$ be the map induced by the matrix 
\[ 
\begin{bmatrix}
1 & 0 & 0  \\
0 & 1 & 0  \\
a & b & 1 
\end{bmatrix},
\]
where $a,b \in F$. Explicitly,
\[
E_{a,b}(z) =\dfrac{z}{ax+by+1} = \dfrac{z}{\langle z,b+a\mathbf{i} \rangle +1} .
\]
We note that $E_{a,b}$ is an elation with centre $0$ and axis $\langle b+a\mathbf{i},z   \rangle = 0$. 
%	
%	\item Cyclic group $C_{r+1}$. 
%	\item $F^*$ is the subgroup of scalar matrices. 
The  group 
\[
\mathcal{E}=\{ E_{a,b} \mid Tr_{F/F'}(a)=Tr_{F/F'}(b)=0 \}
\]
is a group of the automorphisms of $H_r$ that fixes every $t$-secant and acts transitively on the points of 
each $t$-secant of $H_r$. Then the group $\{ E_{0,b} \mid Tr_{F/F'}(b)=0 \}$
fixes all points of $l_0$ and acts transitively on the points   $H_r \cap l$ on  any  $t$-secant $l \ne l_0$. This shows that $H_r$ is an elation KM-arc with elation line $l_0$. 
 	\item Let  $\psi$ be the map induced by the matrix
\[
\begin{bmatrix}
1 & b &0\\
0 & 1 &0\\
0 & 0 &1
\end{bmatrix}.
\] Explicitly, 
$
\psi(z):= x+by+y\mathbf{i}. 
$
We note that $\psi$ is an elation with centre at infinity and axis $l_0$. Also, $\psi$ has order $2$. 
\item Let $\rho_{\gamma, s,t}$ be the map  induced by the matrix
\[ 
\begin{bmatrix}
\gamma & 0 & 0  \\
0 & \gamma^{-1} & 0  \\
s & t & 1 
\end{bmatrix},
\]
where $\gamma \in F'$, $s,t \in F$, $Tr_{F/F'}(s)=\gamma+1, Tr_{F/F'}(t)=\gamma^{-1}+1$. We note that 
\[ 
\begin{bmatrix}
\gamma & 0 & 0  \\
0 & \gamma^{-1} & 0  \\
s & t & 1 
\end{bmatrix}^k =
\begin{bmatrix}
\gamma^k & 0 & 0  \\
0 & \gamma^{-k} & 0  \\
s_k & t_k & 1 
\end{bmatrix}
,
\]
where $s_k=(\gamma^{k-1}+\cdots 1)s,$ and $t_k=(\gamma^{-(k-1)}+\cdots 1)t$. And so, if $\gamma$ is a primitive element of $F'$, then  $\rho_{\gamma,s,t}$ has order $r-1$. 
\end{enumerate}

\begin{theorem} $H_r$ is a translation KM-arc with translation line $l_0$. 
\end{theorem}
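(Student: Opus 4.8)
The plan is to identify the group $\mathcal{T}$ of all elations of $PG(2,q)$ with axis $l_0$ that stabilise $H_r$, show that $|\mathcal{T}|\ge q$, and conclude by a counting argument that $\mathcal{T}$ acts regularly on $H_r\setminus l_0$. First I would fix the ambient picture. Since $u_0=1\in U$, the line $l_0=u_0F$ is a $t$-secant of $H_r$ and $H_r\cap l_0=\{1/v\mid v\in V_1\}$ has size $t=q/r$, so $|H_r\setminus l_0|=q$. The elations with axis $l_0$ form an elementary abelian group $E(l_0)$ of order $q^2$ which acts regularly on the $q^2$ points of $PG(2,q)\setminus l_0$ (they are the translations of the affine plane having $l_0$ as its line at infinity); in the coordinate $z\in K=AG(2,q)$ each of them is uniquely $M_{\mu,\nu}:=\Phi_\mu\circ E_{0,\nu}$ with $\mu,\nu\in F$, where $\Phi_\mu(z)=z+\mu\langle 1,z\rangle$ is the elation with axis $l_0$ and centre the infinite point $(1:0)$ of $l_0$ (so $\psi=\Phi_b$) and $E_{0,\nu}$ is as defined in the text; explicitly $M_{\mu,\nu}(z)=\bigl(z+\mu\langle 1,z\rangle\bigr)/\bigl(\nu\langle 1,z\rangle+1\bigr)$. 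Because $E(l_0)$ acts freely on $PG(2,q)\setminus l_0$, its subgroup $\mathcal{T}$ acts freely on the invariant set $H_r\setminus l_0$; hence all $\mathcal{T}$-orbits there have size $|\mathcal{T}|$, and if $|\mathcal{T}|\ge q$ there can only be one orbit, so $\mathcal{T}$ is transitive on $H_r\setminus l_0$ and $H_r$ is a translation KM-arc with translation line $l_0$.

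The crux is the claim that $M_{\mu,\nu}\in\mathcal{T}$ whenever $\mu\in F'$ and $\mu^2+b\mu=\bigl(Tr_{F/F'}(\nu)\bigr)^2$. By Lemma~\ref{Usetcoord}, $U=\{\xi=x+y\mathbf{i}\mid x,y\in F',\ N(\xi)=1\}$ for the anisotropic $F'$-quadratic form $N(x+y\mathbf{i})=x^2+bxy+y^2$. I would take a point $\lambda u\in H_r$ lying on a $t$-secant other than $l_0$ — so $u\in U$, $y_u:=\langle 1,u\rangle\in F'\setminus\{0\}$, and $Tr_{F/F'}(1/\lambda)=1$ — and compute $M_{\mu,\nu}(\lambda u)=\lambda(u+\mu y_u)/(\nu\lambda y_u+1)$. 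A short computation with the bilinear form of $N$ gives $N(u+\mu y_u)=N(u)+y_u^{\,2}(\mu^2+b\mu)=\bigl(1+y_u\,Tr_{F/F'}(\nu)\bigr)^2$, so $u+\mu y_u=\kappa w$ with $\kappa:=1+y_u\,Tr_{F/F'}(\nu)\in F'\setminus\{0\}$ and $w\in U$. Thus $M_{\mu,\nu}(\lambda u)=\kappa'w$ with $1/\kappa'=(\nu y_u+1/\lambda)/\kappa$, and since $\kappa,y_u\in F'$ and $Tr_{F/F'}(1/\lambda)=1$ this gives $Tr_{F/F'}(1/\kappa')=\bigl(y_u\,Tr_{F/F'}(\nu)+1\bigr)/\kappa=1$, i.e.\ $\kappa'w\in H_r$. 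As $M_{\mu,\nu}$ fixes $l_0$ pointwise, hence fixes $H_r\cap l_0$, it follows that $M_{\mu,\nu}(H_r)\subseteq H_r$, so $M_{\mu,\nu}(H_r)=H_r$.

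Finally I would count the admissible pairs $(\mu,\nu)$. Since $b\ne 0$, exactly $r/2$ elements $e\in F'$ satisfy $Tr_{F'/\mathbb{F}_2}(e/b)=0$; for each such $e$ the quadratic $X^2+bX+e^2$ has two roots $\mu\in F'$, while $\#\{\nu\in F\mid Tr_{F/F'}(\nu)=e\}=q/r$. As $(\mu,\nu)\mapsto M_{\mu,\nu}$ is injective, this produces $\tfrac{r}{2}\cdot 2\cdot\tfrac{q}{r}=q$ distinct elements of $\mathcal{T}$, whence $|\mathcal{T}|\ge q$ and the first paragraph finishes the proof. The step I expect to be the real obstacle is exactly locating these $q$ elations: the ``obvious'' axis-$l_0$ elations fixing $H_r$, namely $\{E_{0,b}\mid Tr_{F/F'}(b)=0\}$ together with $\psi$, only generate a group of order $2q/r$, which is too small once $r>2$, so the translation property genuinely needs the further elations $M_{\mu,\nu}$ with $\mu\notin\{0,b\}$ (whose centres are affine points of $l_0$ other than $0$), and it is the identity $N\bigl(u+\mu\langle 1,u\rangle\bigr)=\bigl(1+\langle 1,u\rangle\,Tr_{F/F'}(\nu)\bigr)^2$ under $\mu^2+b\mu=(Tr_{F/F'}(\nu))^2$ that brings them out. (Alternatively one may obtain them as the conjugates $\rho_{\gamma,s,t}^{\,k}\,\psi\,\rho_{\gamma,s,t}^{-k}$, using that $\rho_{\gamma,s,t}$ fixes $l_0$ and stabilises $H_r$.)
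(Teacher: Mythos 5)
Your proposal is correct, but it takes a genuinely different route from the paper's. The paper first notes that the subgroup $\{E_{0,b}\mid Tr_{F/F'}(b)=0\}$ makes $H_r$ an elation KM-arc with elation line $l_0$, then verifies separately that $\psi$ and the maps $\rho_{\gamma,s,t}$ stabilise $H_r$, and finally, taking $\gamma$ primitive in $F'$, uses the conjugates $\rho^k\psi\rho^{-k}$ (elations with axis $l_0$ carrying $\mathbf{i}F$ to each of the other $t$-secants) to assemble transitivity on $H_r\setminus l_0$ from transitivity on the set of $t$-secants together with transitivity inside each one. You instead exhibit the whole translation group in one stroke: the $q$ elations $M_{\mu,\nu}$ with $\mu\in F'$ and $\mu^2+b\mu=\bigl(Tr_{F/F'}(\nu)\bigr)^2$, verified directly via the identity $N\bigl(u+\mu\langle 1,u\rangle\bigr)=\bigl(1+\langle 1,u\rangle\,Tr_{F/F'}(\nu)\bigr)^2$ for the anisotropic form of Lemma~\ref{Usetcoord}, combined with regularity of the axis-$l_0$ elation group on the $q$ points of $H_r\setminus l_0$. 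I checked the key identity and the count ($\tfrac{r}{2}\cdot 2\cdot\tfrac{q}{r}=q$, using that $X^2+bX+e^2$ splits over $F'$ exactly when $Tr_{F'/\mathbb{F}_2}(e/b)=0$); both are right, and the nonvanishing of $\kappa=1+y_u\,Tr_{F/F'}(\nu)$, which follows from anisotropy, also guarantees that $M_{\mu,\nu}$ never sends a point of $H_r$ to infinity, a point worth making explicit. Your argument is more self-contained — it bypasses the longest computation in the paper's proof (that $\rho_{\gamma,s,t}$ preserves $H_r$) and describes the translation group explicitly as the subgroup of the elementary abelian group $E(l_0)$ cut out by an additive condition on $(\mu,\nu)$ — whereas the paper's route buys extra structural information about the automorphism group (the cyclic group of order $r-1$ permuting the $t$-secants) that it reuses when describing $G_2$, $G_4$, $G_8$ in the Examples subsection.
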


\begin{proof} 
	\begin{enumerate} \item We first show that the map $\psi$ is an automorphism of $H_r$. 
	Since $\psi$ is linear on $F$, it is sufficient to show that $\psi$   fixes the set $U$. We recall from Lemma  \ref{Usetcoord} that 
	\[
	U= \{x+y\mathbf{i} \mid x,y \in F', x^2+y^2+bxy=1 \}. 
	\]
	Let $u=x+y\mathbf{i}\in U$ with $\psi(u)= x'+y'\mathbf{i} = x+by+y\mathbf{i}$. Then
	\begin{align*}
	x'^2+y'^2+bx'y'&= (x+by)^2+y^2+ b(x+by)y \\
	&=  x^2+ y^2+bxy=1,
	\end{align*}
	which shows that $\psi(u) \in U$, and the claim follows. 	The action of $\psi$ on $H_r$ can then be described by its action on $U$. Since $\psi$ has order $2$, it has one fixed point (which is $1$) and $r/2$ orbits of size $2$.

\item We now show that the map $\rho_{\gamma,s,t}$ is an automorphism of $H_r$. Let $p=\lambda u$ be a point on $H_r$, where $u=x+y\mathbf{i} \in U$, and $\lambda \in F$ with $1/\lambda \in V_1$. We have
\[
\rho_{\gamma,s,t}(\lambda u) = \dfrac{\lambda}{B}\gamma x +\dfrac{\lambda}{B}\gamma^{-1} y \mathbf{i},
\]
where $B=s\lambda x+ t \lambda y +1$. We rewrite 
\begin{align*}
\rho_{\gamma,s,t}(\lambda u) &=
\dfrac{\lambda}{AB}(A\gamma x) +\dfrac{\lambda}{AB}(A\gamma^{-1} y) \mathbf{i} \\
&=\dfrac{\lambda}{AB}(x'+y' \mathbf{i}),
\end{align*}
where $x'=A\gamma x, y'=A\gamma^{-1} y$  and $A$ satisfies
\[
\dfrac{1}{A^2}= \gamma^2 x^2+\gamma^{-2} y^2 +bxy. 
\]
With this choice of $A$, we have
\[
x'^2+y'^2 +bx'y'=1,
\]
so that $x'+y'\mathbf{i} \in U$. Note that $A \in F'$, and so 
\begin{align*}
Tr_{F/F'}\left(\dfrac{AB}{\lambda}\right) &=  A^2Tr_{F/F'}\left(\dfrac{B^2}{\lambda^2}\right)  \\
&=A^2Tr_{F/F'}\left(\dfrac{s^2\lambda^2 x^2+ t^2 \lambda^2 y^2 +1}{\lambda^2}\right)\\
&= A^2 Tr_{F/F'}\left(s^2x^2+t^2y^2+\dfrac{1}{\lambda^2}\right)\\
&= A^2\left(x^2Tr_{F/F'}(s^2)+y^2Tr_{F/F'}(t^2)+1\right)\\
&=A^2(x^2(\gamma^2+1)+y^2(\gamma^{-2}+1)+1)\\
&=A^2( \gamma^2 x^2+\gamma^{-2} y^2 +bxy)=1,
\end{align*} which shows that $ \rho_{\gamma,s,t}(p) \in H_r$.  	
\item We consider a map $\rho:=\rho_{\gamma,s,t}$, where $\gamma$ is a primitive element of $F'$. In the following, we describe how $\rho$ permutes the $t$-secants of $H_r$. Using the calculations above, we see that for $u=x+y\mathbf{i} \in U$, the $t$-secant $uF$ is mapped to  $u'F$, where
$
u'=x'+y'\mathbf{i}= A\gamma x +A\gamma^{-1}y \mathbf{i}. 
$
It follows that $\rho^k$ maps $uF$ to $vF$, where
\[
v= A^k\gamma^kx+A^k\gamma^{-k}y \mathbf{i}.
\]
This implies that $\rho^k$ fixes the $t$-secant $uF$ if and only if $u\in \{1,i\}$ or $A^k\gamma^k=A^k\gamma^{-k}=1$. 
The latter case implies that $\gamma^{2k}=1$, which occurs if and only if $k=r-1$. 
Hence, $\rho$ fixes the $t$-secants $l_0$ and $\mathbf{i}F$, and $\langle \rho \rangle$ is transitive on the set of 
$(r-1)$ remaining $t$-secants of $H_r$.

\item Let $l$ be a $t$-secant of $H_r$ different from $\mathbf{i}F$ and $l_0$.	For a suitable $k\in \{ 1, \dots ,r-1 \}$, 
the map $\rho^k \psi \rho^{-k}$ is an elation that fixes $l_0$ and maps the line $\mathbf{i}F$ to $l$. 
Since $\rho^k \psi \rho^{-k}$ has order $2$,  it maps $l$ to the line $\mathbf{i}F$.
It follows that the group of automorphisms 

\[
\left\langle \rho^k \psi \rho^{-k} \mid k \in \{ 1, \dots ,r-1\} \right\rangle
\]

 is a group of elations with axis $l_0$ which is transitive on the set of $t$-secants $H\backslash l_0$. 
 Since $H_r$ is an elation KM-arc with elation line $l_0$, it follows that $H_r$ is a translation $KM$-arc with translation line $l_0$.  %\qed
\end{enumerate}
\end{proof}

We do not know if our general construction of KM-arcs $H_r$ is different from the one constructed by Korchmaros-Mazzocca, 
although their parameters look the same. 
Anyway, we see that some of the known KM-arcs can be easily described by polar coordinates.

\subsection{Examples}
We describe some examples of KM-arcs from the construction in Subsection 5.1. As before, let $F=\mathbb{F}_q$ and $K=\mathbb{F}_{q^2}$. We will also make use of the maps introduced in Subsection 5.2 and describe the full automorphism group of these examples. 
\begin{example}[KM-arcs of type $q/2$] Let $r=2$, $V_1= \{ x \in F \mid Tr_{F/\mathbb{F}_2}(x)=1 \}$, and $U=\{1, \mathbf{i}, \mathbf{i}+1\}$. Then $H_2:= \{\lambda u \mid 1/\lambda \in V_1,u \in U\}$ is a KM-arc of type $q/2$.   According to \cite[Corollary 4.4]{vandendriessche2011}, all KM-arcs of type $q/2$ are projectively equivalent and they are also known as \textit{projective triads}.  
	The automorphism group of $H_2$ is  
	$$G_2 = (\mathcal{E}. \langle \theta, \tau \rangle ).  \langle \sigma' \rangle \cong 
	(\mathcal{E}. S_3 ).  \langle \sigma' \rangle,$$
	where $\tau(x)=\bar{x}$ is the conjugation in $K$ and $S_3$ is the symmetric group. 
\end{example}
\begin{example}[KM-arcs of type $q/4$] Let $r=4$ and 
	$V_1= \{ x \in F \mid Tr_{F/\mathbb{F}_4}(x)=1 \}$.
	Let $\omega \ne 1$ be an element in $F$ such that $\omega^3=1$. 
	Define  $$U:=\{1, \mathbf{i}, 1+\omega \mathbf{i}, \omega+\mathbf{i},\omega + \omega  \mathbf{i}\}.$$
	Then  $H_4:= \{\lambda u \mid 1/\lambda \in V_1,u \in U\}$ is a KM-arc of type $q/4$. In \cite{geertrui2019}, it was proved that elation KM-arcs of type $q/4$ are translation KM-arcs  and the full classification was provided.
	By \cite[Theorem 3.12]{geertrui2019}, $H_4$ 
	is projectively equivalent to the one with parameters described therein. 
	The automorphism group of $H_4$ is 
	$$G_4= (\mathcal{E} . \langle \theta, \tau \rangle). \langle \sigma' \rangle \cong
	(\mathcal{E} . A_5). \langle \sigma' \rangle,$$
	where the map $\tau$ induced by the matrix
	\[
	\begin{bmatrix}
	1 & 1 &0\\
	0 & 1 &0\\
	0 & t &1
	\end{bmatrix},
	\] 
	with $t \in F$ such that $Tr_{F/F'}(t)=\omega$, and $A_5$ is the alternating group.   
\end{example}

\begin{example}[KM-arcs of type $q/8$] Let $r=8$ 
	and $V_1= \{ x \in F \mid Tr_{F/\mathbb{F}_8}(x)=1 \}$.
	Let $\eta$ be an element of $\mathbb{F}_{8}$ satisfying $\eta^3+\eta+1=0$.  Define 
	\[U:=\{
	1, \mathbf{i}, 
	1+\eta \mathbf{i}, 
	\eta + \eta^6  \mathbf{i},
	\eta^6 + \eta^3  \mathbf{i}, 
	\eta^3 + \eta^3  \mathbf{i},
	\eta^3 + \eta^6  \mathbf{i},
	\eta^6 + \eta  \mathbf{i},
	\eta+\mathbf{i}\}.
	\]   
	Then  $H_8:= \{\lambda u \mid 1/\lambda \in V_1,u \in U\}$ is a KM-arc of type $q/8$.
 Using GAP,  we calculated that the automorphism group of $H_8$ is 
 $$G_8= (\mathcal{E} . \langle \theta, \tau \rangle). \langle \sigma' \rangle,$$
 where the map $\tau$ is induced by the matrix
	\[
	 \begin{bmatrix}
		1 & 1 &0\\
		0 & 1 &0\\
		0 & t &1
	\end{bmatrix},
	\] 
with $t \in F$ such that $Tr_{F/F'}(t)=\eta^5$, and the group 
$(\mathcal{E} . \langle \theta, \tau \rangle)/ \mathcal{E} $ has order 504. 
Note that the automorphism $\tau$ is the conjugation in $K$ corrected by some element  $E_{0,b}$.

\end{example}

%\begin{acknowledgements}
%If you'd like to thank anyone, place your comments here
%and remove the percent signs.
%\end{acknowledgements}

\bigskip

{\bf Acknowledgments}

\medskip

%The authors are grateful to the anonymous reviewers for their detailed comments that improved the presentation and 
%quality of this paper. 
This research was supported by UAEU grant 31S366.  

% Authors must disclose all relationships or interests that 
% could have direct or potential influence or impart bias on 
% the work: 
%
% \section*{Conflict of interest}
%
% The authors declare that they have no conflict of interest.

% BibTeX users please use one of
%\bibliographystyle{spbasic}      % basic style, author-year citations
%\bibliographystyle{spmpsci}      % mathematics and physical sciences
%\bibliographystyle{spphys}       % APS-like style for physics
%\bibliography{}   % name your BibTeX data base

% Non-BibTeX users please use

\end{document}